\newtheorem{thm}{Theorem}[section]
\newtheorem{lem}[thm]{Lemma}
\newtheorem{prop}[thm]{Proposition}
\newtheorem{cor}[thm]{Corollary}
\theoremstyle{definition}
\newtheorem{defn}[thm]{Definition}
\newtheorem{ex}[thm]{Example}
\newtheorem{rmk}[thm]{Remark}
\definecolor{mypurp}{RGB}{153,50,204}
\renewcommand{\boxed}[1]{\text{\fboxsep=.2em\fbox{\m@th$\displaystyle#1$}}}
\let\c@thm\c@figure
\title{Cluster Duality for Lagrangian and Orthogonal Grassmannians}
\author{Charles Wang}
\address{Department of Mathematics, Harvard University, Cambridge, MA, USA}
\email{cmwang@math.harvard.edu}
\date{}
\begin{document}

\begin{abstract}
  In \cite{RW} Rietsch and Williams relate cluster structures and mirror symmetry for type A Grassmannians $\Gr(k,n)$, and use this interaction to construct Newton-Okounkov bodies and associated toric degenerations. In this article we define a cluster seed for the Lagrangian Grassmannian, and prove that the associated Newton-Okounkov body agrees up to unimodular equivalence with a polytope obtained from the superpotential defined by Pech and Rietsch on the mirror Orthogonal Grassmannian in \cite{pech-rietsch}. 
\end{abstract}

\maketitle

\tableofcontents
\section{Introduction}

In \cite{RW} Rietsch and Williams view open subsets of the Langlands dual Grassmannians $\Gr(n-k,n) \setminus D_{ac}\cong \Gr(k,n)\setminus D^\vee_{ac}$ in two ways: one as an $\cX$-cluster variety and the other as an $\cA$-cluster variety. Roughly speaking, cluster varieties are unions of algebraic tori $(\mC^*)^N$, indexed by combinatorial objects called \emph{seeds}, identified along certain \emph{mutation maps}. They then study combinatorial data on both sides: the Newton-Okounkov body associated to an $\cX$-torus and the superpotential polytope associated to a corresponding dual $\cA$-torus. Their main result is that these data agree, i.e. the polytopes are equal. Their strategy is to identify a particular seed for which they prove the equality ``by hand,'' and then to argue that the polytopes transform in the same way under seed mutations.

Let $\mX=\LGr(n,2n)$ be the variety of $n$-dimensional Lagrangian subspaces of $\mC^{2n}$ with respect to the symplectic form $\omega_{ij}=(-1)^j\delta_{i,2n+1-j}$. $\mX$ is a \emph{homogeneous space} of Dynkin type $C$, i.e. it can be written as $\Sp_n/P$ for a parabolic subgroup $P\subset \Sp_n$. We consider its embedding as a subvariety of $\Gr(n,2n)$ in its Pl\"ucker embedding $\Gr(n,2n)\hookrightarrow \mP(\wedge^n \mC^{2n})$. We will index Pl\"ucker coordinates on $\Gr(n,2n)$ by elements of $\binom{[2n]}{n}$, the set of $n$-subsets of $[2n]:=\{1,2,\dots, 2n\}$, or alternatively by Young diagrams $\lambda\subset n\times n$ fitting inside the $n\times n$ square. We translate between these two notations by the following bijection. Consider lattice paths in the $n\times n$ rectangle which start in the upper right corner and end in the lower left corner with unit steps (either down or to the left) labelled sequentially by $1,2,\dots, 2n$. Associate to $I\in\binom{[2n]}{n}$ the partition $\lambda_I$ lying above the lattice path whose vertical steps are labelled by the elements of $I$. For example, for $n=3$, the partition associated to $\{1,3,5\}$ is $\yng(3,2,1)$. In particular, Pl\"ucker coordinates for $\mX$ will be labelled interchangeably by $n$-subsets of $[2n]$ and Young diagrams contained in the $n\times n$ square. $\mX$ has dimension $N\coloneq \binom{n+1}{2}$, and a distinguished anticanonical divisor $D_{ac}=D_0+\dots+D_{n}$ made up of the $n+1$ hyperplanes $D_i=\{p_{n\times i}=0\}=\{p_{(n-i+1)\dots (2n-i)}\}$, where $n\times i$ denotes the corresponding Young diagram. In this article, $\mX$ takes the role of the $\cX$-cluster variety. 

Unlike the situation for the type $A$ Grassmannians, the Langlands dual Grassmannian $\mX^\vee$ is not isomorphic to $\mX$. Roughly speaking, we can associate to the Lie group $G=\Sp_n$ the data of the character lattice $\chi$ of a maximal torus $T$, and the root system $\Phi\subset\chi$. Then there is a unique Lie group $G^\vee$, called the \emph{Langlands dual group}, having as root system the coroots $\Phi^\vee$ and as character lattice the cocharacter lattice $\chi^\vee$. The parabolic subgroup $P\subset \Sp_n$ above then corresponds to some $P^\vee\subset G^\vee$, and we then set $\mX^\vee=P^\vee\backslash G^\vee$, and call this the \emph{Langlands dual Grassmannian}. 

For $\mX=\LGr(n,2n)$, $\mX^\vee$ is the orthogonal Grassmannian $\mathrm{OG}^{co}(n+1,2n+1)$ of co-isotropic $(n+1)$-dimensional subspaces of $\mC^{2n+1}$ with respect to a quadratic form $Q$. (It is isomorphic to the orthogonal Grassmannian $\mathrm{OG}(n,2n+1)$ of isotropic $n$-dimensional subspaces of $\mC^{2n+1}$ with respect to $Q$.) Following \cite{pech-rietsch}, we consider $\mX^\vee$ in its minimal embedding $\mX^\vee\hookrightarrow \mP(V^*)$, where $V$ is the irreducible representation corresponding to the parabolic subgroup $P^\vee$ ($P^\vee$ will be a maximal parabolic subgroup since $P$ was). As noted in \cite[\S3]{pech-rietsch}, because $\mX$ is \emph{cominuscule}, its cohomology is isomorphic (by the geometric Satake correspondence) to $V$. In this article, $\mX^\vee$ takes the role of the $\cA$-cluster variety. 

In this article, we carry out the first step of the \cite{RW} strategy for $\mX$ and $\mX^\vee$. We identify a particular seed, which we call the \emph{co-rectangles seed}, and show that the Newton-Okounkov body corresponding to this seed is unimodularly equivalent to the superpotential polytope defined using the Landau-Ginzburg model studied in \cite{pech-rietsch}. 

The outline of this article is as follows. In section 2, we define our co-rectangles seed. In section 3, we define the Newton-Okounkov body $\Delta_{\text{co-rect}}$ and the superpotential polytope $\Gamma$, and relate $\Gamma$ to a chain polytope. In section 4, we prove that $\Delta_{\text{co-rect}}$ and $\Gamma$ are unimodularly equivalent, and in section 5, we describe upcoming work. 

\subsection*{Acknowledgements}
The author is grateful to Konstanze Rietsch, Bernd Sturmfels, and Lauren Williams for many helpful discussions, comments, and suggestions.

\section{The co-rectangles Seed}

Seeds in a \emph{cluster structure} of rank $l$ for a commutative algebra are specified by a pair $({\bf x},B)$ of \emph{cluster variables} ${\bf x}=(x_1,\dots, x_m)$ and an $m\times l$ \emph{extended exchange matrix} $B$, for some $m\ge l$. If the topmost $l\times l$ square submatrix of $B$ is skew-symmetric, we can replace $B$ with a \emph{quiver} $Q$, which is a directed, oriented graph which may have parallel edges, but no $2$-cycles or loops, and some vertices designated as 'frozen.' For brevity, we do not give a full definition of a cluster algebra, and instead refer to \cite[\S3.1]{fwz1-3}.

The seeds for the coordinate rings of Grassmannians $\Gr(k,n)$ studied in \cite{RW}, which were first proven to give a cluster structure in \cite{scott}, can be described by quivers, and furthermore certain seeds admit an additional description in terms of certain planar, bicolored graphs called \emph{plabic graphs}. When we wish to distinguish the vertices of a plabic graph according to the bicoloring, we will refer to them as hollow ($\circ$) or filled ($\bullet$). Roughly speaking, ${\bf x}$ corresponds to the set of face labels of a plabic graph $G$, and $Q$ corresponds to the dual graph of $G$. A more thorough exposition of plabic graphs can be found in \cite{post}, where they were first introduced, and the relationship between cluster seeds and plabic graphs for Grassmannians can be found in \cite[\S5-6]{RW}. 

\subsection{The co-rectangles Symmetric Plabic Graph}
For the Lagrangian Grassmannian, the seed we are interested in studying is associated to an extended exchange matrix $B_{\text{co-rect}}$ whose top square submatrix is not skew-symmetric. This seems to be related to the fact that the type $C_n$ Dynkin diagram is not simply-laced. However, certain plabic seeds for the Grassmannian can be used to obtain seeds for the Lagrangian Grassmannian by \emph{quiver folding}. We first recall the analogous notion of \emph{symmetric plabic graphs}, due to \cite{karpman}.

\begin{defn}[{\cite[Def. 5.1]{karpman}}]
  A \emph{symmetric plabic graph} for $\mX$ is a plabic graph $G$ with $2n$ boundary vertices, labelled clockwise by $1,\dots, 2n$, and a distinguished diameter $d$ of the bounding disk satisfying the following conditions:
  \begin{enumerate}
  \item $d$ has one endpoint between vertices $2n$ and $1$, and the other between $n$ and $n+1$.
  \item No vertex of $G$ lies on $d$.
  \item Reflecting $G$ through $d$ gives a graph identical to $G$ with the colors of vertices reversed. 
  \end{enumerate}
\end{defn}

Our seed comes from the \emph{co-rectangles} symmetric plabic graph $G^{\text{co-rect}}_n$ (face are labelled by{\bf co}mplements of {\bf rect}angular Young diagrams in the $n\times n$ square). We define $G^{\text{co-rect}}_n$ by example for $n=4$ (see \ref{sym-plab-ex}), and give the associated dual quiver in \ref{quiver-ex} and folding in \ref{matrix-ex}. The extension to arbitrary $n$ is straightforward. We note that our $G^{\text{co-rect}}_n$ is mutation equivalent to $G^{\text{rec}}_{n,2n}$ of \cite{RW}, so in particular satisfies a technical assumption called \emph{reducedness}. 

  \begin{figure}[h!]\label{sym-plab-ex}
    \centering
    \begin{tikzpicture}
      \node (1) at (6,0) {$1$};
      \node (2) at (4,0) {$2$};
      \node (3) at (2,0) {$3$};
      \node (4) at (0,0) {$4$};
      \node (5) at (8,8) {$5$};
      \node (6) at (8,6) {$6$};
      \node (7) at (8,4) {$7$};
      \node (8) at (8,2) {$8$};
      
      \node (9) at (7,6) {$\circ$};
      \node (10) at (5,6) {$\circ$};
      \node (11) at (3,6) {$\circ$};
      \node (12) at (6,5) {\textbullet};
      \node (13) at (4,5) {\textbullet};
      \node (14) at (2,5) {\textbullet};
      \node (15) at (7,4) {$\circ$};
      \node (16) at (5,4) {$\circ$};
      \node (17) at (3,4) {$\circ$};
      \node (18) at (6,3) {\textbullet};
      \node (19) at (4,3) {\textbullet};
      \node (20) at (2,3) {\textbullet};
      \node (21) at (7,2) {$\circ$};
      \node (22) at (5,2) {$\circ$};
      \node (23) at (3,2) {$\circ$};
      \node (24) at (6,1) {\textbullet};
      \node (25) at (4,1) {\textbullet};
      \node (26) at (2,1) {\textbullet};

      \node (27) at (0,3) {$\circ$};
      \node (28) at (5,8) {\textbullet};

      \draw (28)--(9)--(12)--(15)--(18)--(21)--(24)--(1);
      \draw (28)--(10)--(13)--(16)--(19)--(22)--(25)--(2);
      \draw (28)--(11)--(14)--(17)--(20)--(23)--(26)--(3);
      \draw (5)--(28) to [out=180,in=90] (27);
      \draw (27)--(4);

      \draw (27)--(14)--(11)--(13)--(10)--(12)--(9)--(6);
      \draw (27)--(20)--(17)--(19)--(16)--(18)--(15)--(7);
      \draw (27)--(26)--(23)--(25)--(22)--(24)--(21)--(8);

      \node (29) at (1,1) {\yng(4,4,4)};
      \node (30) at (3,1) {\yng(4,4)};
      \node (31) at (5,1) {\yng(4)};
      \node (32) at (7,1) {$\varnothing$};
      \node (33) at (2,2) {\yng(4,4,4,1)};
      \node (34) at (3.5,2.5) {\yng(4,4,1,1)};
      \node (35) at (5.5,2.5) {\yng(4,1,1,1)};
      \node (36) at (7.5,2.5) {\yng(1,1,1,1)};
      \node (37) at (2,4) {\yng(4,4,4,2)};
      \node (38) at (3.5,4.5) {\yng(4,4,2,2)};
      \node (39) at (5.5,4.5) {\yng(4,2,2,2)};
      \node (40) at (7.5,4.5) {\yng(2,2,2,2)};
      \node (41) at (2,6) {\yng(4,4,4,3)};
      \node (42) at (4,6) {\yng(4,4,3,3)};
      \node (43) at (6,6) {\yng(4,3,3,3)};
      \node (44) at (7,7) {\yng(3,3,3,3)};
      \node (45) at (1,7) {\yng(4,4,4,4)};
      \node (46) at (-2,2) {};
      \node (47) at (0,8) {};
      \node (48) at (6,10) {};
      
      \draw (1)--(2)--(3)--(4);
      \draw (4) to [out=180,in=270] (46) to [out=90,in=225] (47) to [out=45,in=180] (48) to [out=0,in=90] (5);
      \draw (5)--(6)--(7)--(8);
      \draw (8) to [out=270,in=0] (1);

      \node (49) at (7.5,.5) {};
      \draw[dashed] (49)--(47);
    \end{tikzpicture}
    \caption{The co-rectangles symmetric plabic graph $G^{\text{co-rect}}_4$.}
  \end{figure}
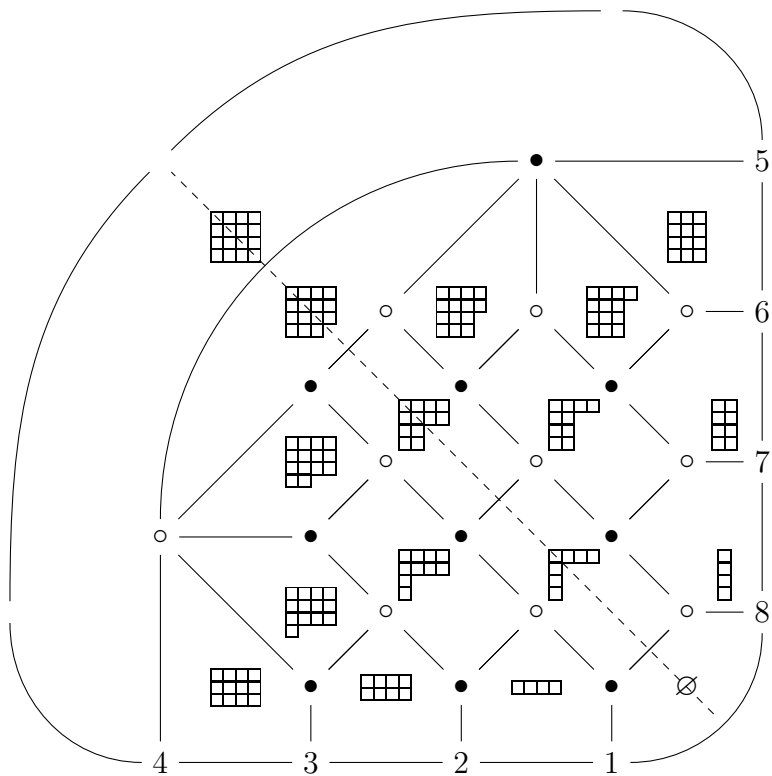

\begin{rmk}
  The face labels in \ref{sym-plab-ex} are auxiliary data associated to the graph, and the procedure to obtain them is described in \cite[Definition 3.5]{RW}.
\end{rmk}

Next we give the dual quiver, which corresponds to an $\cX$-seed for $\Gr(4,8)$.

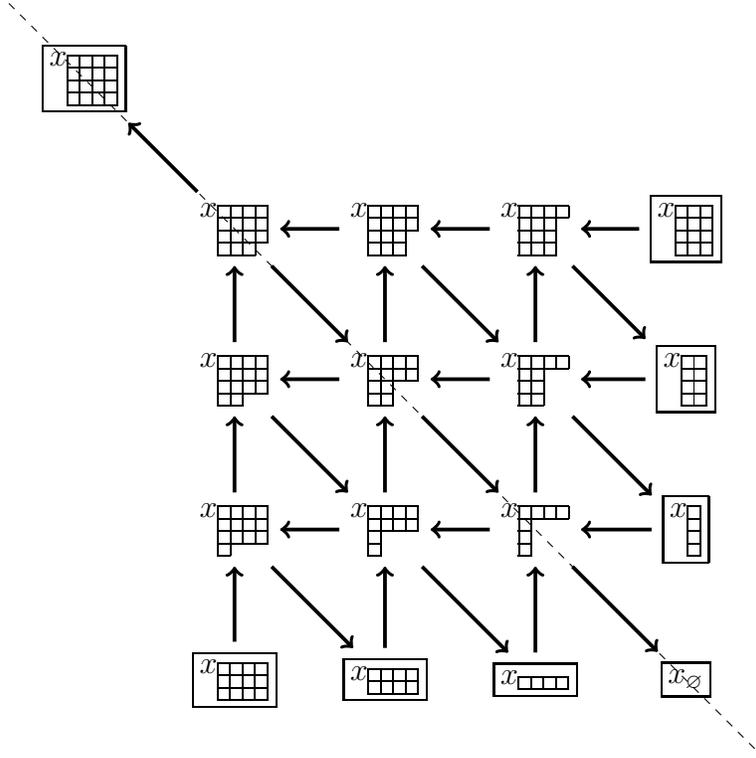
\begin{figure}[h!] \label{quiver-ex}
  \begin{tikzpicture}

    \node(0) at (0,8) {$\boxed{x_{\yng(4,4,4,4)}}$};
    \node(1) at (2,6) {$x_{\yng(4,4,4,3)}$};
    \node(2) at (4,4) {$x_{\yng(4,4,2,2)}$};
    \node(3) at (6,2) {$x_{\yng(4,1,1,1)}$};
    \node(4) at (8,0) {$\boxed{x_{\varnothing}}$};
    \node(5) at (2,4) {$x_{\yng(4,4,4,2)}$};
    \node(6) at (2,2) {$x_{\yng(4,4,4,1)}$};
    \node(7) at (2,0) {$\boxed{x_{\yng(4,4,4)}}$};
    \node(8) at (4,6) {$x_{\yng(4,4,3,3)}$};
    \node(9) at (4,2) {$x_{\yng(4,4,1,1)}$};
    \node(10) at (4,0) {$\boxed{x_{\yng(4,4)}}$};
    \node(11) at (6,6) {$x_{\yng(4,3,3,3)}$};
    \node(12) at (6,4) {$x_{\yng(4,2,2,2)}$};
    \node(13) at (6,0) {$\boxed{x_{\yng(4)}}$};
    \node(14) at (8,6) {$\boxed{x_{\yng(3,3,3,3)}}$};
    \node(15) at (8,4) {$\boxed{x_{\yng(2,2,2,2)}}$};
    \node(16) at (8,2) {$\boxed{x_{\yng(1,1,1,1)}}$};

    \draw[->, line width=.5mm] (1) edge (0);
    \draw[->, line width=.5mm] (1) edge (2);
    \draw[->, line width=.5mm] (2) edge (3);
    \draw[->, line width=.5mm] (3) edge (4);
    \draw[->, line width=.5mm] (8) edge (1);
    \draw[->, line width=.5mm] (11) edge (8);
    \draw[->, line width=.5mm] (14) edge (11);
    \draw[->, line width=.5mm] (5) edge (1);
    \draw[->, line width=.5mm] (6) edge (5);
    \draw[->, line width=.5mm] (7) edge (6);
    \draw[->, line width=.5mm] (10) edge (9);
    \draw[->, line width=.5mm] (9) edge (2);
    \draw[->, line width=.5mm] (2) edge (8);
    \draw[->, line width=.5mm] (13) edge (3);
    \draw[->, line width=.5mm] (3) edge (12);
    \draw[->, line width=.5mm] (12) edge (11);
    \draw[->, line width=.5mm] (15) edge (12);
    \draw[->, line width=.5mm] (12) edge (2);
    \draw[->, line width=.5mm] (2) edge (5);
    \draw[->, line width=.5mm] (16) edge (3);
    \draw[->, line width=.5mm] (3) edge (9);
    \draw[->, line width=.5mm] (9) edge (6);
    \draw[->, line width=.5mm] (5) edge (9);
    \draw[->, line width=.5mm] (9) edge (13);
    \draw[->, line width=.5mm] (6) edge (10);
    \draw[->, line width=.5mm] (8) edge (12);
    \draw[->, line width=.5mm] (12) edge (16);
    \draw[->, line width=.5mm] (11) edge (15);

    \draw[dashed] (-1,9) -- (9,-1);
    
  \end{tikzpicture}
  \caption{The dual quiver $Q$ to $G^{\text{co-rect}}_4$. Vertices of $Q$ are labelled by the $\cX$-cluster variables. Edges of $Q$ are directed such that when crossing an edge of the plabic graph, the hollow vertex is to the left. Boxes have been drawn around frozen vertices, which correspond to faces of the plabic graph adjacent to the boundary disk.}
\end{figure}

The associated exchange matrix is the $17\times 9$ matrix $B$ whose rows are indexed by any of the Young diagrams in \ref{quiver-ex} and whose columns are indexed by the non-boxed Young diagrams. The $(\mu,\nu)$ entry of $B$ is given by:
\[
  B_{\mu,\nu}=
  \begin{cases}
    1 & \mu\rightarrow\nu \textrm{ is an edge in }Q\\
    -1 & \mu\leftarrow\nu \textrm{ is an edge in }Q\\
    0 & \textrm{otherwise} 
  \end{cases}
\]

Finally, to obtain an $\cX$-seed for $\mX$, we fold the quiver \ref{quiver-ex} by the involution induced by sending a vertex to its reflection about the dashed diagonal. Because we will not make use of it in this article, we do not define quiver folding, and we refer to \cite[\S4.4]{fwz4-5} for the full definition. We instead just give the $11\times 6$ extended exchange matrix $B^{\text{co-rect}}_4$ of the folding:

\begin{ex}\label{matrix-ex}
  The columns of the folded matrix are indexed (in order) by the mutable orbits of the involution. These are: $\{\yng(4,4,4,3)\}, \{\yng(4,4,2,2)\}, \{\yng(4,1,1,1)\}, \{\yng(4,4,4,2), \yng(4,4,3,3)\}, \{\yng(4,4,4,1),\yng(4,3,3,3)\}, \{\yng(4,4,1,1),\yng(4,2,2,2)\}$. The first six rows will be indexed in the same order. The remaining 5 rows will be indexed (in order) by the frozen orbits of the involution $\{\yng(4,4,4,4)\}, \{\yng(4,4,4),\yng(3,3,3,3)\}, \{\yng(4,4),\yng(2,2,2,2)\}, \{\yng(4),\yng(1,1,1,1)\}, \{\varnothing\}$. 

  \[B^{\text{co-rect}}_4=
    \left(
      \begin{array}{cccccc}
        0&1&0&-1&0&0\\
        -1&0&1&1&0&-1\\
        0&-1&0&0&0&1\\
        2&-2&0&0&-1&1\\
        0&0&0&1&0&-1\\
        0&2&-2&-1&1&0\\
        -1&0&0&0&0&0\\
        0&0&0&0&1&0\\
        0&0&0&0&-1&1\\
        0&0&2&0&0&-1\\
        0&0&-1&0&0&0\\
      \end{array}
    \right)
  \]
  \end{ex}

\subsection{The Network Parametrization ($\cX$-cluster seed) for $\mX$}
We will now describe how to use a symmetric plabic graph to construct a \emph{network torus} in $\mX$. This will allow us to compute valuations associated to a seed using plabic graphs in the following section. We summarize the presentation in \cite[\S6]{RW}.

\begin{defn}
  A \emph{perfect orientation} $O$ of a plabic graph $G$ is an orientation of each edge of $G$ such that each filled internal vertex is incident to exactly one edge directed away from it, and each hollow vertex is incident to exactly one edge directed towards it. The source set $I_O$ of $O$ is the set of boundary vertex labels which are sources of $G$ as a directed graph with edge directions $O$. 
\end{defn}

Let $G$ denote a plabic graph with a perfect orientation $O$. If we need further assumptions on $G$, they will be stated explicitly. 

\begin{defn}
  Let $J$ be a subset of the boundary vertices of $G$ with $|J|=|I_O|$. A \emph{flow} from $I_O$ to $J$ is a collection of pairwise vertex-disjoint paths with sources $I_O\setminus (I_O\cap J)$ and sinks $J\setminus (I_O\cap J)$. 
\end{defn}

Because each path $p$ in a flow $F$ begins and ends at a boundary vertex of $G$, $p$ partitions the faces of $G$ into two sets, those to the left of $p$ and those to the right of $p$ in the direction of the path. Let $p_L$ denote the set of face labels to the left of $p$.

\begin{defn}
  For a path $p$ in a flow $F$, we define the \emph{weight} of $p$ to be $\mathrm{wt}(p)=\prod_{\lambda \in p_L} x_\lambda$. For a flow $F$, we define the \emph{weight} to be $\mathrm{wt}(F)=\prod_{p\in F}\mathrm{wt}(p)$. Finally, for a subset $J$ of the boundary vertices of $G$ with $|J|=|I_O|$, let $\cF$ denote the set of all flows from $I_O$ to $J$, and define the \emph{flow polynomial} $P_J^G=\sum_{F\in\cF}\mathrm{wt}(F)$. 
\end{defn}

Now let $G=G^{\text{co-rect}}_n$ be the co-rectangles plabic graph. In what follows, we will need a perfect orientation $O_{\text{co-rect}}$ on $G$, defined as follows.

\begin{defn}
  Set $\{1,2,\dots, n\}$ to be sources, and $\{n+1,n+2,\dots, 2n\}$ to be sinks. (The edges adjacent to vertices $1\le i\le n$ will be directed away from $i$, and the edges adjacent to vertices $n+1\le i\le 2n$ will be directed towards $i$.) Because symmetric plabic graphs are also usual plabic graphs, then there is a unique such perfect orientation by \cite[Lemma 4.5]{psw}, see \cite[Remark 6.4]{RW}. We call this $O_{\text{co-rect}}$.
\end{defn}
This is the choice of perfect orientation we will use for the rest of the article. For an example of the above definitions, see \ref{graph-stuff}, where we give our perfect orientation for $n=3$, and compute a flow polynomial.

Next, let $S=\{x_{\mu}\mid \mu\textrm{ is a face label of }G^{\text{co-rect}}_n\}$ be the set of face labels of the co-rectangles plabic graph. We think of these as coordinates on the \emph{network torus} $\mathbb{T}_G\cong (\mC^*)^{|S|}$, and we use the flow polynomials to define an embedding of $\mathbb{T}_G$ into $\Gr(n,2n)$.

\begin{thm}[{{\cite[Theorem 12.7]{post}}},{{\cite[Theorem 6.8]{RW}}}]
  Let $G$ be the co-rectangles plabic graph, and $J\in\binom{[2n]}{n}$. Consider the map $\Phi:\mathbb{T}_G\rightarrow \Gr(n,2n)$ defined by sending $(x_\mu\mid\mu\in S)\in \mathbb{T}_G\mapsto (P_J^G(x_\mu)\mid J\in\binom{[2n]}{n})\in \Gr(n,2n)$. Then $\Phi$ is well-defined, and gives an embedding $\mathbb{T}_G\hookrightarrow\Gr(n,2n)$. 
\end{thm}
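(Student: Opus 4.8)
The statement is a special case of the cited results of Postnikov and of Rietsch--Williams, so the plan is to reduce to those and to check that their hypotheses hold for $G^{\text{co-rect}}_n$. For \emph{well-definedness}, recall (from \cite[Theorem 12.7]{post}, in the flow form recalled in \cite[\S6]{RW}) that for any plabic graph $G$ with a perfect orientation the tuple of flow polynomials $(P_J^G)_{J\in\binom{[2n]}{n}}$ is, up to a single common monomial factor, the tuple of maximal minors of the boundary measurement matrix of $(G,O_{\text{co-rect}})$; hence it satisfies the Pl\"ucker relations, and whenever it is nonzero it defines a point of $\Gr(n,2n)$. Nonvanishing is automatic here, since the empty flow is the unique flow from $I_O$ to $I_O$, so $P_{I_O}^G\equiv 1$ and the image of $\Phi$ lies in the affine chart $\{p_{I_O}\ne 0\}$. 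That the image is moreover dense follows because $G^{\text{co-rect}}_n$ is mutation equivalent to the top-cell graph $G^{\text{rec}}_{n,2n}$ of \cite{RW} and so represents the uniform matroid, whence no $P_J^G$ vanishes identically.

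For the \emph{embedding} claim I would invoke \cite[Theorem 6.8]{RW}: when $G$ is reduced the network parametrization is inverted by an explicit map, each face variable $x_\mu$ being recovered as a Laurent monomial in the $P_J^G$ whose exponents are read off from $G$ (equivalently, from the quiver $Q$ of Figure~\ref{quiver-ex}). A morphism from the torus $\mathbb{T}_G$ whose image is a locally closed subvariety of $\Gr(n,2n)$ and which admits a regular inverse on that image is an isomorphism onto its image; together with density this shows that $\Phi$ is an embedding onto a dense open subset. To quote \cite[Theorem 6.8]{RW} one needs two inputs about $G^{\text{co-rect}}_n$: that it is reduced, and that it carries a perfect orientation with source set $\{1,\dots,n\}$. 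Both are already in hand --- the perfect orientation $O_{\text{co-rect}}$ was produced above via \cite[Lemma 4.5]{psw}, and reducedness follows from mutation-equivalence to $G^{\text{rec}}_{n,2n}$ together with the invariance of reducedness under plabic moves.

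Thus no essentially new argument is required; the content of the proof is in verifying that the cited theorems apply. The step I expect to demand the most care is aligning conventions: one must confirm that $O_{\text{co-rect}}$ and the face-labelling of Figure~\ref{sym-plab-ex} (the one from \cite[Definition 3.5]{RW}) are precisely the data for which the flow formulas and the inverse monomial formulas of \cite{RW} are stated, and one must keep careful track of the overall scaling relating the $|S|$ face variables to the coordinates on the image in $\Gr(n,2n)$. Once these dictionaries are matched, \cite[Theorem 12.7]{post} and \cite[Theorem 6.8]{RW} apply verbatim.
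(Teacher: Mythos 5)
Your proposal is correct and matches the paper's treatment: the paper gives no independent proof of this statement but simply quotes it from \cite[Theorem 12.7]{post} and \cite[Theorem 6.8]{RW}, having already supplied exactly the inputs you verify (the perfect orientation $O_{\text{co-rect}}$ with source set $\{1,\dots,n\}$ via \cite[Lemma 4.5]{psw}, and reducedness via move-equivalence to $G^{\text{rec}}_{n,2n}$). Your additional remarks on nonvanishing of $P_{[n]}^G$ and invertibility of the face variables are consistent with how the paper later uses the parametrization.
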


Finally, let $G=G^{\text{co-rect}}_n$ be the co-rectangles symmetric plabic graph. We define the equivalence relation $\sim$ on $S$ given by $x_{\mu}\sim x_{\mu^T}$, where $\mu^T$ denotes the transpose partition to $\mu$. In Karpman's language, this corresponds to taking a \emph{symmetric weighting}, and Karpman shows that restricting to these weightings gives an embedding whose image lands inside of $\mX\subset \Gr(n,2n)$. We think of $S/\sim$ as coordinates on the network torus $\mathbb{T}_{\text{co-rect}}\cong (\mC^*)^{|S/\sim|}$, and we use the flow polynomials to define an embedding of $\mathbb{T}_{\text{co-rect}}$ into $\mX$. 

\begin{thm}[{{\cite[Theorem 5.15]{karpman}}}]
  Let $G=G^{\text{co-rect}}_n$ be the co-rectangles symmetric plabic graph, and $J\in \binom{[2n]}{n}$. Consider the map $\Phi:\mathbb{T}_{\text{co-rect}}\hookrightarrow \mX$ which is defined by sending $(x_\mu\mid \mu\in S/\sim)\in \mathbb{T}_{\text{co-rect}}\mapsto (P_J^G(x_\mu=x_{\mu^T})\mid J\in \binom{[2n]}{n})\in \mX$. Then $\Phi$ is well-defined, and gives an embedding $\mathbb{T}_{\text{co-rect}}\hookrightarrow \mX$. 
\end{thm}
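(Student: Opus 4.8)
The statement is Karpman's \cite[Theorem 5.15]{karpman}, so the plan is to check that $G^{\text{co-rect}}_n$ satisfies her hypotheses and to indicate why the conclusion follows. For the hypotheses: $G^{\text{co-rect}}_n$ is a symmetric plabic graph by construction (the diameter $d$ is the dashed segment in \ref{sym-plab-ex}, no vertex lies on it, and reflecting through $d$ visibly reverses the bicoloring), and it is reduced because it is mutation equivalent to $G^{\text{rec}}_{n,2n}$ of \cite{RW}, as noted above. Granting this, the \emph{embedding} assertion I would argue formally: let $\tau$ be the involution of the ambient network torus $\mathbb{T}_G\cong(\mC^*)^{|S|}$ that interchanges the coordinates $x_\mu$ and $x_{\mu^T}$. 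Its fixed locus is exactly the closed subtorus $\mathbb{T}_{\text{co-rect}}\hookrightarrow\mathbb{T}_G$ defined by $x_\mu=x_{\mu^T}$, and the map in the theorem is the composite of this closed embedding with the network embedding $\mathbb{T}_G\hookrightarrow\Gr(n,2n)$ of the preceding theorem; a composite of embeddings is an embedding. Thus the only substantive point is that $\Phi(\mathbb{T}_{\text{co-rect}})\subseteq\mX$.

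To prove this I would compare $\tau$ with the geometric involution $\iota\colon\Gr(n,2n)\to\Gr(n,2n)$, $V\mapsto V^{\perp}$, the orthogonal complement with respect to $\omega_{ij}=(-1)^j\delta_{i,2n+1-j}$; by definition $\mathrm{Fix}(\iota)=\LGr(n,2n)=\mX$. The goal is the identity $\iota\circ\Phi=\Phi\circ\tau$ on $\mathbb{T}_G$; once this is known, restricting to $\mathbb{T}_{\text{co-rect}}=\mathrm{Fix}(\tau)$ gives $\iota\circ\Phi=\Phi$ there, hence $\Phi(\mathbb{T}_{\text{co-rect}})\subseteq\mathrm{Fix}(\iota)=\mX$. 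This identity has two ingredients. First, on Pl\"ucker coordinates $\iota$ sends $p_\lambda$ to $p_{\lambda^T}$: the map $V\mapsto V^{\perp}$ is the annihilator map twisted by the reflection $i\mapsto 2n+1-i$, and on $\binom{[2n]}{n}$ the operation $I\mapsto\{2n+1-i:i\notin I\}$ is precisely transposition $\lambda_I\mapsto\lambda_I^T$, with the signs $(-1)^j$ built into $\omega$ arranged so that no sign appears. Second, reflecting $G^{\text{co-rect}}_n$ through $d$ and reversing all colors is a symmetry of the graph under which the boundary vertices are permuted by $i\mapsto 2n+1-i$ and the face labels are permuted by $\mu\mapsto\mu^T$ (for the labelling rule of \cite[Definition 3.5]{RW}), and this graph symmetry realizes $\iota$ on the image of the network parametrization.

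The main obstacle is this last point: reconciling the plabic reflection with the symplectic involution. Concretely one must check that reflection through $d$ carries each flow from $I_O$ to $J$ to a flow from $I_O$ to $\widetilde{J}:=\{2n+1-i:i\notin J\}$ whose weight, after imposing $x_\mu=x_{\mu^T}$, agrees with that of the original, so that $P_J^{G}=P_{\widetilde{J}}^{G}$ on $\mathbb{T}_{\text{co-rect}}$ --- which is exactly the Pl\"ucker form of the Lagrangian condition $p_\lambda=p_{\lambda^T}$. This requires in particular that $O_{\text{co-rect}}$ is itself $d$-symmetric (which follows from the uniqueness in \cite[Lemma 4.5]{psw}, since reflecting and reversing $O_{\text{co-rect}}$ again produces a perfect orientation with source set $\{1,\dots,n\}$) and that $\mu\mapsto\mu^T$ really is how the \cite{RW} face labels transform under the reflection. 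This bookkeeping is carried out in \cite{karpman}, so for our purposes it is enough to verify the hypotheses above and invoke \cite[Theorem 5.15]{karpman}.
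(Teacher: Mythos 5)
Your proposal is correct and matches what the paper does: the paper gives no proof of this statement, importing it wholesale as \cite[Theorem 5.15]{karpman} (with only the remark that the edge-to-face-weighting translation is \cite[Lemma 11.2]{post}), and your argument likewise reduces to checking that $G^{\text{co-rect}}_n$ is a reduced symmetric plabic graph and invoking Karpman, with the involution sketch ($\iota\circ\Phi=\Phi\circ\tau$ on $\mathrm{Fix}(\tau)$) being a reasonable outline of the content you correctly defer to her paper.
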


\begin{rmk}
  Although Karpman's paper is written in terms of edge weightings, the translation to face weightings can be found in \cite[Lemma 11.2]{post}. 
\end{rmk}

Thus we associate to $G^{\text{co-rect}}_n$ and $O_{\text{co-rect}}$ a dense torus $\mathbb{T}_{\text{co-rect}}\hookrightarrow\mX$. On the level of coordinate rings, this induces an injection $\mC[\mX]\hookrightarrow \mC[\mathbb{T}_{\text{co-rect}}]$, so we may express polynomials in the Pl\"ucker coordinates on $\mX$ as Laurent polynomials in the coordinates on $\mathbb{T}_{\text{co-rect}}$.

\section{Polytopes}\label{polytopes}

\subsection{The Newton-Okounkov body $\Delta_{\text{co-rect}}$}
We associate to the co-rectangles symmetric plabic graph $G^{\text{co-rect}}_n$ and ample divisor $D=D_n$ a Newton Okounkov body $\Delta_{\text{co-rect}}(D)$ by the following procedure, following \cite[Definition 8.1]{RW}. First, we define a valuation using the inclusion $\mC[\mX]\hookrightarrow \mC[\mathbb{T}_{\text{co-rect}}]$ obtained at the end of the previous section. 

\begin{defn}\label{val-corect}
  Fix a total order on the torus coordinates $S$ defined at the end of the previous section. Then we define the valuation $\val_{\text{co-rect}}:\mC[\mX]\setminus \{0\}\rightarrow \mZ^{|S|}$ by sending $f\in \mC[\mX]$ to the exponent vector of the lexicographically minimal term when $f$ is viewed as an element of $\mC[\mathbb{T}_{\text{co-rect}}]$, i.e. as a Laurent polynomial in the torus coordinates. 
\end{defn}

Now, using this valuation, we define the Newton-Okounkov body:

\begin{defn}
  Let $\val_{\text{co-rect}}$ be as above. Then we define 
  \[\Delta_{\text{co-rect}}=\overline{\conv\left(\bigcup_{r=1}^\infty \frac{1}{r} \val_{\text{co-rect}}(H^0(\mX,\cO(rD)))\right)}\]
\end{defn}

Concretely, the nonzero sections in $H^0(\mX,\cO(rD))$ can be identified with Laurent polynomials whose numerators are degree $r$ homogeneous polynomials in the Pl\"ucker coordinates of $\mX$, and whose denominators are the Pl\"ucker coordinate $p_{n\times n}^r$. For $G^{\text{co-rect}}_n$ and $O_{\text{co-rect}}$, the only flow from $[n]$ to $[n]$ is the empty flow, so the expression of $p_{n\times n}$ on the torus $\mathbb{T}_{\text{co-rect}}$ is $1$. Therefore, computing valuations of sections $H^0(\mX,\cO(rD))$ reduces to computing valuations of elements of $\mC[\mX]$, so we can use the valuation \ref{val-corect}. 

\begin{rmk}
Although the valuation $\val_{\text{co-rect}}$ depended upon a choice of total order of the torus coordinates $S$, the Newton-Okounkov body $\Delta_{\text{co-rect}}$ does not, and we will not make use of any choice of total order in our proofs.
\end{rmk}

\begin{ex}\label{graph-stuff}
  For $\LGr(3,6)$, which will be our running example, we give our co-rectangles plabic graph as well as a perfect orientation. We also give two flows.

  \begin{figure}[h!]\label{top-flow}
    \begin{tikzpicture}
      \node (1) at (4,0) {\color{mypurp} $1$};
      \node (2) at (2,0) {\color{mypurp} $2$};
      \node (3) at (0,0) {\color{mypurp} $3$};
      \node (4) at (6,6) {\color{mypurp} $4$};
      \node (5) at (6,4) {\color{mypurp} $5$};
      \node (6) at (6,2) {$6$};
      
      \node (7) at (4,1) {$\bullet$};
      \node (8) at (5,2) {$\circ$};
      \node (9) at (4,3) {$\bullet$};
      \node (10) at (5,4) {$\circ$};
      \node (11) at (3,6) {$\bullet$};
      \node (12) at (0,3) {$\circ$};
      
      \node (13) at (2,1) {$\bullet$};
      \node (14) at (3,2) {$\circ$};
      \node (15) at (2,3) {$\bullet$};
      \node (16) at (3,4) {$\circ$};
      
      \draw[->, line width=.5mm] (1) edge (7);
      \draw[->, line width=.5mm] (7) edge (8);
      \draw[->, line width=.5mm] (8) edge (6);
      \draw[->, line width=.5mm] (8) edge (9);
      \draw[->, line width=.5mm, mypurp] (9) edge (10);
      \draw[->, line width=.5mm] (10) edge (11);
      \draw[->, line width=.5mm, mypurp] (11) edge (4);
      \draw[->, line width=.5mm, mypurp] (10) edge (5);
      \draw[->, line width=.5mm, mypurp] (2) edge (13);
      \draw[->, line width=.5mm, mypurp] (13) edge (14);
      \draw[->, line width=.5mm] (14) edge (7);
      \draw[->, line width=.5mm, mypurp] (14) edge (15);
      \draw[->, line width=.5mm, mypurp] (15) edge (16);
      \draw[->, line width=.5mm, mypurp] (16) edge (9);
      \draw[->, line width=.5mm, mypurp] (3) edge (12);
      \draw[->, line width=.5mm] (12) edge (13);
      \draw[->, line width=.5mm] (12) edge (15);
      \draw[->, line width=.5mm, mypurp] (12) edge (11);
      \draw[->, line width=.5mm] (11) edge (16);
      
      \node (17) at (1,5) {$\yng(3,3,3)$};          
      \node (18) at (2,4) {$\yng(3,3,2)$};
      \node (19) at (3.5,2.5) {$\yng(3,1,1)$};
      \node (20) at (5,1) {$\varnothing$};
      \node (21) at (3,1) {$\yng(3)$};
      \node (22) at (1,1) {$\yng(3,3)$};
      \node (23) at (5,3) {$\yng(1,1,1)$};
      \node (24) at (5,5) {$\yng(2,2,2)$};
      \node (25) at (2,2) {$\yng(3,3,1)$};
      \node (26) at (4,4) {$\yng(3,2,2)$};
    \end{tikzpicture}
    \caption{The co-rectangles symmetric plabic graph for $n=3$, with acyclic perfect orientation, and (minimal) flow from $\{1,2,3\}$ to $\{1,5,4\}$ in purple.}
  \end{figure}
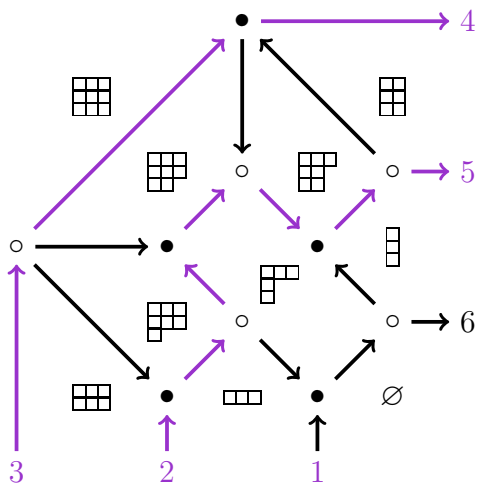
  
  \begin{figure}[h!]\label{bottom-flow}
    \begin{tikzpicture}
      \node (1) at (4,0) {\color{mypurp} $1$};
      \node (2) at (2,0) {\color{mypurp} $2$};
      \node (3) at (0,0) {\color{mypurp} $3$};
      \node (4) at (6,6) {\color{mypurp} $4$};
      \node (5) at (6,4) {\color{mypurp} $5$};
      \node (6) at (6,2) {$6$};
      
      \node (7) at (4,1) {$\bullet$};
      \node (8) at (5,2) {$\circ$};
      \node (9) at (4,3) {$\bullet$};
      \node (10) at (5,4) {$\circ$};
      \node (11) at (3,6) {$\bullet$};
      \node (12) at (0,3) {$\circ$};
      
      \node (13) at (2,1) {$\bullet$};
      \node (14) at (3,2) {$\circ$};
      \node (15) at (2,3) {$\bullet$};
      \node (16) at (3,4) {$\circ$};
      
      \draw[->, line width=.5mm] (1) edge (7);
      \draw[->, line width=.5mm, mypurp] (7) edge (8);
      \draw[->, line width=.5mm] (8) edge (6);
      \draw[->, line width=.5mm, mypurp] (8) edge (9);
      \draw[->, line width=.5mm, mypurp] (9) edge (10);
      \draw[->, line width=.5mm] (10) edge (11);
      \draw[->, line width=.5mm, mypurp] (11) edge (4);
      \draw[->, line width=.5mm, mypurp] (10) edge (5);
      \draw[->, line width=.5mm, mypurp] (2) edge (13);
      \draw[->, line width=.5mm, mypurp] (13) edge (14);
      \draw[->, line width=.5mm, mypurp] (14) edge (7);
      \draw[->, line width=.5mm] (14) edge (15);
      \draw[->, line width=.5mm] (15) edge (16);
      \draw[->, line width=.5mm] (16) edge (9);
      \draw[->, line width=.5mm, mypurp] (3) edge (12);
      \draw[->, line width=.5mm] (12) edge (13);
      \draw[->, line width=.5mm] (12) edge (15);
      \draw[->, line width=.5mm, mypurp] (12) edge (11);
      \draw[->, line width=.5mm] (11) edge (16);
      
      \node (17) at (1,5) {$\yng(3,3,3)$};          
      \node (18) at (2,4) {$\yng(3,3,2)$};
      \node (19) at (3.5,2.5) {$\yng(3,1,1)$};
      \node (20) at (5,1) {$\varnothing$};
      \node (21) at (3,1) {$\yng(3)$};
      \node (22) at (1,1) {$\yng(3,3)$};
      \node (23) at (5,3) {$\yng(1,1,1)$};
      \node (24) at (5,5) {$\yng(2,2,2)$};
      \node (25) at (2,2) {$\yng(3,3,1)$};
      \node (26) at (4,4) {$\yng(3,2,2)$};
    \end{tikzpicture}
    \caption{A second flow from \{1,2,3\} to \{1,4,5\} for $n=3$.}
  \end{figure}
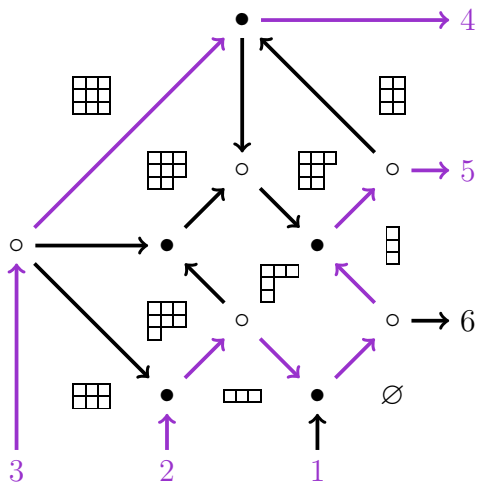

  For the top flow above \ref{top-flow}, there are no face labels to the left of the path $1\rightarrow 1$. The face labels to the left of $3\rightarrow 4$ are $\yng(3,3,3)$. The face labels to the left of $2\rightarrow 5$ are $\yng(3,3)$, $\yng(3,3,1)$, $\yng(3,3,2)$, $\yng(3,2,2)$, $\yng(2,2,2)$, and $\yng(3,3,3)$, contributing a monomial $x_{(3,3,3)}^2x_{(3,3)}^2x_{(3,3,1)}x_{(3,3,2)}x_{(3,2,2)}^2$.

  For the bottom flow above \ref{bottom-flow}, there are no face labels to the left of the path $1\rightarrow 1$. The face labels to the left of $3\rightarrow 4$ are $\yng(3,3,3)$. The face labels to the left of $2\rightarrow 5$ are $\yng(3,3), \yng(3,3,1), \yng(3,1,1), \yng(3,2,2), \yng(2,2,2), \yng(3,3,2), \yng(3,3,3)$, contributing a monomial $x_{(3,3,3)}^2x_{(3,3)}^2x_{(3,3,1)}x_{(3,3,2)}x_{(3,2,2)}^2x_{(3,1,1)}$

  These are the only flows from $\{1,2,3\}$ to $\{1,4,5\}$ for $G=G^{\text{co-rect}}_n$ and $O_{\text{co-rect}}$, so the flow polynomial is the sum of these
  \[P_{\{1,4,5\}}^G=(x_{(3,3,3)}^2x_{(3,3)}^2x_{(3,3,1)}x_{(3,3,2)}x_{(3,2,2)}^2)(1+x_{(3,1,1)})\]
    The minimal term is $(x_{(3,3,3)}^2x_{(3,3)}^2x_{(3,3,1)}x_{(3,3,2)}x_{(3,2,2)}^2)$, so the valuation is $(0,2,0,2,1,2)$, agreeing with the coordinates given in \ref{no-body} below. 
\end{ex}

Alternatively, because symmetric plabic graphs are also plabic graphs in the usual sense, we can compute Pl\"ucker coordinate valuations for plabic seeds more directly from Young diagrams.

\begin{defn}
  For any skew partition $\nu\subset n\times n$, we define $\mathrm{maxdiag}(\nu)$ to be the maximum number of boxes along any diagonal of slope $-1$.   
\end{defn}

\begin{prop} For $\mu\subset n\times n$ a face label of $G$ and $\lambda\subset n\times n$ arbitrary , we have
\[\val_{\text{co-rect}}(p_\lambda)_{\mu}=\begin{cases}\mathrm{maxdiag}(\mu\backslash\lambda)+\mathrm{maxdiag}(\mu^T\backslash\lambda) & \mu\neq\mu^T\\
    \mathrm{maxdiag}(\mu\backslash\lambda) & \mu=\mu^T\end{cases}\]
\end{prop}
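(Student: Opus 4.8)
The plan is to compute the valuation $\val_{\text{co-rect}}(p_\lambda)_\mu$ by analyzing the flow polynomial $P_{I_\lambda}^G$, where $I_\lambda \in \binom{[2n]}{n}$ is the subset corresponding to $\lambda$ under the paper's bijection. Since the torus coordinates are indexed by face labels $S/\!\sim$ of the symmetric plabic graph, and the valuation picks out the exponent vector of the lexicographically minimal monomial, the $\mu$-component of the valuation is governed by the minimal power of $x_\mu$ (together with $x_{\mu^T}$, which is identified with it) appearing across all flows from $I_O = [n]$ to $I_\lambda$. Concretely, for a monomial coming from a flow $F$, the exponent of $x_\mu$ counts how many paths $p \in F$ have $\mu$ to their left, i.e. $\#\{p \in F : \mu \in p_L\}$; after imposing the symmetric weighting, the contribution is this count plus the analogous count for $\mu^T$ when $\mu \ne \mu^T$, and just the count for $\mu$ when $\mu = \mu^T$. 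So the proposition reduces to showing that the \emph{minimum} over all flows $F$ from $[n]$ to $I_\lambda$ of $\#\{p \in F : \mu \in p_L\}$ equals $\mathrm{maxdiag}(\mu \backslash \lambda)$.

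\textbf{Key steps.} First I would set up the correspondence between flows in $G^{\text{co-rect}}_n$ with the chosen perfect orientation $O_{\text{co-rect}}$ and non-intersecting lattice-path configurations, exploiting the explicit grid-like structure of the co-rectangles graph — its filled and hollow vertices are arranged in a rectangular array, so a flow from $[n]$ to $J$ is a family of $n$ vertex-disjoint monotone paths through this grid. Second, I would identify, for a single path $p$ from source $i$ to sink $j$, exactly which face labels lie to its left: because the face labels are complements of rectangles and the path is monotone (down/left steps), the set $p_L$ of faces left of $p$ is controlled by how $p$ "cuts the corner" near the region of the grid indexed by the boxes of $\mu \backslash \lambda$. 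Third, I would argue that the number of paths in a flow that must pass to the right of a given face $\mu$ (equivalently, forcing $\mu \notin p_L$) is constrained by a non-crossing condition, and that the minimal number of paths forced to have $\mu$ on their left, over all flows realizing the target $I_\lambda$, is precisely the size of the largest antichain-like structure in $\mu\backslash\lambda$ — which is $\mathrm{maxdiag}(\mu\backslash\lambda)$, the longest slope-$-1$ diagonal. The $\mu = \mu^T$ case then follows by symmetry: the symmetric weighting makes $x_\mu$ and $x_{\mu^T}$ the same variable, so a self-transpose $\mu$ gets counted once rather than twice, yielding the single $\mathrm{maxdiag}$ term. Finally I would verify consistency with the running example of \ref{graph-stuff}, where the given flows for $\LGr(3,6)$ produce the stated valuation $(0,2,0,2,1,2)$.

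\textbf{Main obstacle.} The hard part will be the combinatorial heart of step three: proving that $\mathrm{maxdiag}(\mu\backslash\lambda)$ is exactly the minimum, over all flows from $[n]$ to $I_\lambda$, of the number of paths having $\mu$ to their left. The lower bound (some flow achieves at most $\mathrm{maxdiag}$ paths left of $\mu$) requires exhibiting an explicit "greedy" flow that routes paths as far to the right of $\mu$ as the target $I_\lambda$ permits; the upper bound (every flow has at least $\mathrm{maxdiag}$ paths left of $\mu$) is a non-crossing/pigeonhole argument showing that the boxes on a maximal slope-$-1$ diagonal of $\mu\backslash\lambda$ each force a distinct path to pass on their upper-left side. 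Translating $\mathrm{maxdiag}$ of a skew shape into this path-counting statement is where the Lindström–Gessel–Viennot style bookkeeping and the precise geometry of $O_{\text{co-rect}}$ must be reconciled; I expect this to require a careful induction on $n$ or on the number of boxes of $\mu\backslash\lambda$, and to lean on the fact (asserted in the excerpt) that $G^{\text{co-rect}}_n$ is mutation-equivalent to the rectangles graph $G^{\text{rec}}_{n,2n}$ of \cite{RW}, for which analogous Young-diagram valuation formulas are already known.
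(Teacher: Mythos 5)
Your overall framing of what needs to be computed is right, and your handling of the symmetric weighting (count faces to the left of $\mu$ plus those to the left of $\mu^T$ when $\mu\neq\mu^T$, once when $\mu=\mu^T$) matches the paper. But there are two genuine gaps. First, your reduction from the valuation to ``the minimum over all flows of $\#\{p\in F:\mu\in p_L\}$'' is not automatic: $\val_{\text{co-rect}}$ is the exponent vector of the \emph{lexicographically} minimal term, and the lex-minimal term need not achieve the coordinatewise minimum in each variable separately. What makes the two agree is the existence of a single \emph{minimal flow} whose weight divides the weight of every other flow from $[n]$ to $\lambda$ (this is \cite[Corollary 12.4]{RW}, which the paper invokes); without establishing that, your reduction is unjustified. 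Second, the combinatorial heart — that the number of paths in this minimal flow with face $\mu$ on the left equals $\mathrm{maxdiag}(\mu\backslash\lambda)$ — is exactly what you defer as ``the main obstacle,'' so the proposal as written does not prove the proposition: it restates it in flow language. The paper does not reprove this; it observes that the symmetric plabic graph is an ordinary plabic graph with the same flows, and then cites \cite[Lemma 6.3]{RW} (perfect orientation with source set $[n]$), \cite[Corollary 12.4]{RW} (minimal flow), and \cite[Corollary 16.19]{RW} (the maxdiag formula), after which only the $\mu$ versus $\mu^T$ bookkeeping from the symmetric weighting remains.

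A further caution: your proposed crutch of using the mutation equivalence of $G^{\text{co-rect}}_n$ with $G^{\text{rec}}_{n,2n}$ would not help here. The valuation, the face labels, and the flow model are all attached to the particular seed/network chart, and mutation equivalence does not transport a maxdiag-type formula from one seed's face labels to another's; what is actually needed (and what the paper uses) is that the RW results apply directly to the co-rectangles graph viewed as an ordinary plabic graph. If you want a self-contained proof along your lines, you would need to carry out the Lindstr\"om--Gessel--Viennot style argument you sketch in full, including the divisibility statement for the minimal flow — at which point you would essentially be reproving \cite[Corollaries 12.4 and 16.19]{RW}.
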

  
\begin{proof}
  Because symmetric plabic graphs are also plabic graphs in the usual sense, then by \cite[Lemma 6.3]{RW} we may choose a perfect orientation $O$ with source set $\{1,2,\dots, n\}$. Recall that we think of $\lambda$ also as an $n$-subset of $[2n]$. Then, because flows of the symmetric plabic graph are the same as flows in the underlying plabic graph, there is a minimal flow $F$ from $[n]$ to $\lambda$ by \cite[Corollary 12.4]{RW}, and $\val_{\text{co-rect}}(p_\lambda)$ is the number of paths in $F$ which have the face labelled by $\mu$ to the left, plus the number of paths in $F$ which have the face labelled by $\mu^T$ to the left if $\mu\neq \mu^T$. Finally, by \cite[Corollary 16.19]{RW}, this is equal to $\mathrm{maxdiag}(\mu\backslash\lambda)$ if $\mu=\mu^T$, and $\mathrm{maxdiag}(\mu\backslash\lambda)+\mathrm{maxdiag}(\mu^T\backslash\lambda)$ if $\mu\neq\mu^T$. 
\end{proof}

\begin{ex}\label{no-body}
  For $\LGr(3,6)$, we have 14 Pl\"ucker coordinates with their valuations in coordinates $(156,126,145,125,124,123)$ (or in Young diagrams $({\tiny\yng(3)}, {\tiny\yng(3,3)}, {\tiny\yng(3,1,1)}, {\tiny\yng(3,3,1)}, {\tiny\yng(3,3,2)}, {\tiny\yng(3,3,3)})$):
  
  \[\begin{array}{c|c}
      I\in \binom{[6]}{3} & \val_{\text{co-rect}}(p_I)\\ \hline
      123 & (0, 0, 0, 0, 0, 0)\\
      124 & (0, 0, 0, 0, 0, 1)\\
      125=134 & (0, 1, 0, 1, 1, 1)\\
      126=234 & (1, 1, 1, 2, 1, 1)\\
      135 & (0, 2, 0, 2, 1, 1)\\
      136=235 & (1, 2, 1, 2, 1, 1)\\
      145 & (0, 2, 0, 2, 1, 2)\\
    \end{array}
    \quad
    \begin{array}{c|c}
      I\in \binom{[6]}{3} & \val_{\text{co-rect}}(p_I)\\ \hline
      146=245 & (1, 2, 1, 2, 1, 2)\\
      156=345 & (1, 3, 1, 3, 2, 2)\\
      236 & (2, 2, 1, 2, 1, 1)\\
      246 & (2, 2, 1, 2, 1, 2)\\
      256=346 & (2, 3, 1, 3, 2, 2)\\
      356 & (2, 4, 1, 4, 2, 2)\\
      456 & (2, 4, 1, 4, 2, 3)
    \end{array}
  \]
  and the inequalities defining the convex hull of these coordinates:
  \[\left(
      \begin{array}{ccccccc}
        0&0&-1&0&1&0&0\\
        0&-1&1&2&-1&0&0\\
        0&1&0&-1&0&0&0\\
        0&0&0&0&-1&2&0\\
        0&0&0&-1&1&-1&0\\
        0&0&0&0&0&-1&1\\  
        1&0&0&-1&0&0&0\\  
        1&1&0&-1&-1&1&0\\
        1&0&1&1&-1&-1&0\\
        1&0&1&0&0&-1&-1  
      \end{array}
    \right)
    \left(
      \begin{array}{c}
        1\\
        p_{156}\\
        p_{126}\\
        p_{145}\\
        p_{125}\\
        p_{124}\\
        p_{123}
      \end{array}
    \right)\ge 0
  \]

  The convex hull of these valuations has $f$-vector $(14,51,86,78,39,10)$ and volume $16=\deg\LGr(3,6)$. Although $456$ appears as a face label of the co-rectangles plabic graph, there is no flow beginning at $\{1,2,3\}$ with this face to the left, so every $\val_{\text{co-rect}}(p_\lambda)_{456}=0$ for any $\lambda$. Thus we exclude this coordinate in order to work with a full-dimensional polytope. 
\end{ex}

The fact that the volume of the convex hull of the valuations of the Pl\"ucker coordinates is equal to the degree of $\LGr(3,6)$ in the example above is not an accident. In fact, as we will see in the proof of \ref{main-thm}:

\begin{thm}
  $\conv(\{\val_{\text{co-rect}}(p_\lambda)\mid \lambda\subset n\times n\})=\Delta_{\text{co-rect}}$ is a Newton-Okounkov body for $\mX$ with respect to the valuation $\val_{\text{co-rect}}$. (Equivalently, the Pl\"ucker coordinates form a Khovanskii basis for $\mC[\mX]$ with respect to the valuation $\val_{\text{co-rect}}$.)
\end{thm}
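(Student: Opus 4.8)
The plan is to realize $\Delta_{\text{co-rect}}$ as a Newton--Okounkov body in the sense of Kaveh--Manon by exhibiting a \emph{Khovanskii basis} for $\mC[\mX]$ with respect to $\val_{\text{co-rect}}$, namely the Pl\"ucker coordinates themselves, and then invoking the standard fact (see e.g. Kaveh--Manon, or \cite[\S8]{RW}) that when a graded algebra admits a finite Khovanskii basis the associated Newton--Okounkov body equals the convex hull of the valuations of the basis elements, and moreover is a polytope of lattice volume equal to the degree. Concretely, $\mC[\mX]$ is the homogeneous coordinate ring in the Pl\"ucker embedding, so it suffices to show that the initial forms (leading monomials under the lexicographic term order from \ref{val-corect}) of the Pl\"ucker coordinates generate the associated graded algebra $\mathrm{gr}_{\val_{\text{co-rect}}}\mC[\mX]$; equivalently, that the initial ideal $\mathrm{in}(I)$ of the Pl\"ucker ideal $I$ of $\mX$ is generated by the initial forms of elements of $I$ that are polynomial in the Pl\"ucker coordinates, so no ``new'' relations appear in higher degree.

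The first step is bookkeeping: using the proposition computing $\val_{\text{co-rect}}(p_\lambda)_\mu = \mathrm{maxdiag}(\mu\backslash\lambda) + \mathrm{maxdiag}(\mu^T\backslash\lambda)$ (with the self-transpose case halved), I would record the full list of valuation vectors $v_\lambda := \val_{\text{co-rect}}(p_\lambda)$ for $\lambda\subset n\times n$ and show that they are the vertices of the polytope $\conv\{v_\lambda\}$ — in particular that they are distinct and each is a vertex, so the candidate Khovanskii basis is minimal. The second and central step is a degree/dimension count. One inclusion is automatic: $\conv\{v_\lambda\} \subseteq \Delta_{\text{co-rect}}$ since each $v_\lambda/1 \in \frac1r\val(H^0(\mX,\mathcal O(rD)))$ for $r=1$ (the $p_\lambda$ are sections of $\mathcal O(D_n)$ after dividing by $p_{n\times n}$, which has valuation $0$), and $\Delta_{\text{co-rect}}$ is convex and closed. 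For the reverse inclusion it is enough to show that the semigroup generated by $\{(1,v_\lambda)\}$ has the same ``growth'' as $\mC[\mX]$, i.e. $\#\{$lattice points of $r\cdot\conv\{v_\lambda\}\}$ grows like $\dim H^0(\mX,\mathcal O(rD))$, which by Ehrhart/Hilbert-polynomial comparison reduces to checking that the normalized volume of $\conv\{v_\lambda\}$ equals $\deg\mX = \binom{n+1}{2}!/(\text{product of hooks})$ — the classical degree of the Lagrangian Grassmannian in its Pl\"ucker embedding. I expect to extract this from the mirror/chain-polytope description: since the later sections identify $\Delta_{\text{co-rect}}$ with the superpotential polytope $\Gamma$ and relate $\Gamma$ to a chain polytope of an explicit poset, the normalized volume is computable combinatorially (number of linear extensions), and one checks it matches $\deg\mX$.

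An alternative, and probably cleaner, route for the reverse inclusion is to exploit that $G^{\text{co-rect}}_n$ is mutation-equivalent to $G^{\text{rec}}_{n,2n}$ of \cite{RW}: RW prove (for type $A$ Grassmannians) that Pl\"ucker coordinates form a Khovanskii basis for the valuation attached to any plabic seed, and that Newton--Okounkov bodies for mutation-equivalent seeds are related by tropicalized cluster mutation (a unimodular piecewise-linear change of coordinates that preserves the Khovanskii-basis property). Passing to the symmetric/folded setting, one needs that the symmetric-weighting restriction $\mC[\mX] \hookrightarrow \mC[\mathbb T_{\text{co-rect}}]$ is compatible with the ambient $\mC[\Gr(n,2n)] \hookrightarrow \mC[\mathbb T_G]$ — which is exactly Karpman's theorem — so that a Khovanskii basis upstairs restricts to one downstairs, modulo checking that the restricted Pl\"ucker coordinates still span the initial algebra of the restricted ring. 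The main obstacle I anticipate is precisely this last compatibility: the folding identifies torus coordinates $x_\mu = x_{\mu^T}$, so the valuation lands in a quotient lattice, and one must verify that the initial terms of Pl\"ucker \emph{relations among self-transpose-identified coordinates} are still generated in degree one — i.e. that no genuinely new initial relation is created by the identification $\sim$. Handling this carefully (perhaps by a direct SAGBI-style subduction argument: show every standard monomial in Pl\"ucker coordinates has its leading term reducible using the straightening relations, now read in the folded torus) is where the real work lies; everything else is volume bookkeeping.
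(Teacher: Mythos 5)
Your main route is essentially the paper's proof: the inclusion $\conv(\{\val_{\text{co-rect}}(p_\lambda)\})\subseteq\Delta_{\text{co-rect}}$ is automatic, and the reverse inclusion is obtained by a volume comparison, with $\vol(\Delta_{\text{co-rect}})=\deg\mX$ coming from general Newton--Okounkov theory (the valuation is full rank and $D$ is ample) and $\vol(\conv(\{\val_{\text{co-rect}}(p_\lambda)\}))=\deg\mX$ coming from the unimodular map $M_n$ carrying the superpotential/chain polytope $\Gamma$, whose volume is the number of linear extensions of $\cP_n$, i.e.\ the number of staircase standard Young tableaux, i.e.\ $\deg\mX$, onto that convex hull. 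The one point to phrase carefully is that you must invoke the identification $M_n(\Gamma)=\conv(\{\val_{\text{co-rect}}(p_\lambda)\})$ rather than $M_n(\Gamma)=\Delta_{\text{co-rect}}$ (the latter is the conclusion, established simultaneously by this very volume argument); your alternative folding/mutation-equivalence route is not needed and is not the path the paper takes.
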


\subsection{The superpotential polytope $\Gamma$}
We use the Laurent polynomial expression for the restriction of the superpotential $W_q$ to a torus $(\mC^*)^{\binom{n+1}{2}}\hookrightarrow \mX^\vee$ for the Landau-Ginzburg model for $\mX$ found by Pech and Rietsch:
\begin{defn}[{{\cite[Prop. A.1]{pech-rietsch}}}]
  Let coordinates on the torus above be given by $a_{ij}$ for $1\le i\le j\le n$, and let $\Lambda$ denote the set of strict partitions with at least one part of size $n$ that are contained in the maximal, right-justified staircase in the $n\times n$ square. For any $\lambda\in\Lambda$, label each box by $(i,j)$ where $i$ indexes the row and $j$ the column. Then set $\lambda_j$ to be the largest index such that $(\lambda_j,j)\in\lambda$. Then the restriction of the superpotential to this torus is given by
\[W_q=\sum_{i\le j\in [n]}a_{ij}+\sum_{\lambda \in \Lambda}\frac{q}{\prod_{j\in [n]}a_{\lambda_jj}}\]
\end{defn}

\begin{ex}\label{potential-ex}
  For $n=3$, the superpotential has $\binom{3+1}{2}+2^{3-1}=10$ terms:
  \[a_{11}+a_{12}+a_{13}+a_{22}+a_{23}+a_{33}+\frac{q}{a_{11}a_{12}a_{13}}+\frac{q}{a_{11}a_{12}a_{23}}+\frac{q}{a_{11}a_{22}a_{23}}+\frac{q}{a_{11}a_{22}a_{33}}\]
  where the last four terms correspond to the diagrams
  \ytableaushort{{11}{12}{13}},
  \ytableaushort{{11}{12}{13},\none\none{23}},
  \ytableaushort{{11}{12}{13},\none{22}{23}}, and
  \ytableaushort{{11}{12}{13},\none{22}{23},\none\none{33}}
\end{ex}

In order to define the superpotential polytope $\Gamma$, we first define tropicalization for Laurent polynomial whose coefficients are all positive, real numbers.

\begin{defn}[{{\cite[Def. 10.7]{RW}}}]
  For any Laurent polynomial $h$ in variables $z_1,\dots, z_k$ with coefficients in $\mR_{>0}$, we define $\Trop(h):\mR^k\rightarrow \mR$ inductively as follows. First, we set $\Trop(z_i)(y_1,\dots, y_k)=y_i$, and we denote this tropicalization by a capital letter $\Trop(z_i)=Z_i$. Next, if $h_1$ and $h_2$ are any Laurent polynomials with positive coefficients, and $c_1,c_2$ are any positive real numbers, then
  \[\Trop(c_1h_1+c_2h_2) = \min(\Trop(h_1),\Trop(h_2))\quad\text{and} \quad \Trop(h_1h_2)=\Trop(h_1)+\Trop(h_2) \]
This inductively defines $\Trop(h)$. 
\end{defn}

Following \cite[Def. 10.14]{RW}, we make the following definition for the $\Gamma$.

\begin{defn}
  Consider $W_q:\mR^{\binom{n}{2}}\times \mR\rightarrow \mR$ as a Laurent polynomial with positive coefficients in the variables $a_{ij}$ (corresponding to the first factor of $\mR^{\binom{n}{2}}$ and $q$ corresponding to the second factor of $\mR$. Then the \emph{superpotential polytope} $\Gamma$ is defined by
  \[\Gamma=\{y\in\mR^{\binom{n}{2}}\mid \Trop(W_q)(y,1)\ge 0\}\]
Implicitly, we are ``tropicalizing" $q^i$ to $i$ by the evaluation $\Trop(W_q)(y,1)$. 
\end{defn}

\begin{ex}
  The superpotential polytope for $n=3$ corresponding to the potential \ref{potential-ex} is a polytope in $\mR^{\binom{3+1}{2}}$, with coordinates indexed by the $A_{ij}$ ordered lexicographically, defined by the inequalities:
  \begin{align*}
    A_{ij}\ge 0\\
    1-A_{11}-A_{12}-A_{13}\ge 0\\
    1-A_{11}-A_{12}-A_{23}\ge 0\\
    1-A_{11}-A_{22}-A_{23}\ge 0\\
    1-A_{11}-A_{22}-A_{33}\ge 0
  \end{align*}

\end{ex}

\begin{rmk}
  The choice of $1$ in the formula $\Trop(W_q)(x,1)\ge 0$ in the definition of $\Gamma$ corresponds to our choice of divisor $D=1*D_n$. In fact, \cite{RW} defines $\Delta_G(D)$ and $\Gamma_G(D)$ for more general divisors $D$ than just $D_n$ and more general seeds $G$. However, we have suppressed the dependence of $\Gamma$ on $G$ and $D$ because we have not discussed the cluster structure for $\mX^\vee$. This will be part of upcoming work \cite{upcoming}. 
\end{rmk}

\subsection{Poset polytope combinatorics}

In \cite{poset-poly}, Stanley associated two polytopes to a poset $P$: the \emph{order polytope} and the \emph{chain polytope}. The chain polytope lives in $\mR^{\abs{P}}$, and is defined by the inequalities $e_b \ge 0$ for any $b\in P$ and for any chain $b_1<b_2<\dots<b_k$ of $P$, we have $e_{b_1}+e_{b_2}+\dots+e_{b_k}\le 1$. In particular, because of the positivity inequalities, it is enough to consider the chain inequalities $e_{b_1}+e_{b_2}+\dots+e_{b_k}\le 1$ when $b_1<b_2<\dots<b_k$ is any maximal chain of $P$.

Let $\cP_n$ be the poset on the elements $\{b_{ij}\mid 1\le i\le j\le n\}$, with the cover relations $b_{ij}> b_{i+1j+1},b_{ij+1}$. The superpotential polytope $\Gamma$ produced above is the chain polytope of $\cP_n$: the terms $a_{ij}$ correspond to the positivity inequalities, and the terms $\frac{q}{\prod_{j\in [n]}a_{i_jj}}$ correspond to maximal chain inequalities. 

\begin{ex} 
  \begin{figure}[h!]
    \centering
    \begin{tikzpicture}
      \node (11) at (0,0) {$b_{11}$};
      \node (12) at (1,-1) {$b_{12}$};
      \node (13) at (2,-2) {$b_{13}$};
      \node (22) at (-1,-1) {$b_{22}$};
      \node (23) at (0,-2) {$b_{23}$};
      \node (33) at (-2,-2) {$b_{33}$};
      
      \draw (11)--(12)--(13);
      \draw (11)--(22)--(33);
      \draw (22)--(23);
      \draw (12)--(23);
    \end{tikzpicture}
    \caption{Hasse diagram of $\cP_3$}
  \end{figure}

  The four maximal chains of $\cP_3$ are $b_{33}\le b_{22}\le b_{11}$, $b_{23}\le b_{22}\le b_{11}$, $b_{23}\le b_{12}\le b_{11}$, and $b_{13}\le b_{12}\le b_{11}$. These correspond exactly to the `$q$' terms of the superpotential above, so the chain polytope of $\cP_3$ coincides with the superpotential polytope.  
\end{ex}

By \cite[Thm. 2.2 and Cor. 4.2]{poset-poly}, the superpotential polytope has as many vertices as antichains in $\cP_n$, and volume equal to the number of linear extensions of $\cP_n$.

\begin{lem} The number of antichains of $\cP_n$ is $C_{n+1}=\frac{1}{n+2} \binom{2n+2}{n+1}$.
\end{lem}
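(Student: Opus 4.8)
The plan is to set up a bijection between antichains of $\cP_n$ and a Catalan family counted by $C_{n+1}$. First I would unwind the structure of $\cP_n$: it is the poset on pairs $\{(i,j) : 1 \le i \le j \le n\}$ with covers $b_{ij} > b_{i+1,j+1}$ and $b_{ij} > b_{i,j+1}$, so drawing it in the plane it looks like a triangular ``staircase'' poset, and in fact $\cP_n$ is isomorphic to the poset of boxes in a right-justified staircase shape $(n, n-1, \dots, 1)$ ordered so that a box dominates the box directly below it and the box diagonally down-left of it (equivalently, after a $45^\circ$ rotation, $\cP_n$ is the ``type $A$ root poset'' / triangular poset whose order ideals are well-studied). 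The cleanest route is therefore to recognize $\cP_n$ as (isomorphic to) the poset whose order ideals are counted by Catalan numbers and to invoke the antichain–order-ideal bijection: antichains of any finite poset $P$ are in bijection with order ideals (down-sets) of $P$, via $A \mapsto \{x : x \le a \text{ for some } a \in A\}$, the ideal generated by $A$; the inverse sends an ideal to its set of maximal elements. So it suffices to count order ideals of $\cP_n$.

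Next I would count the order ideals of $\cP_n$ directly by encoding them as lattice paths. Because the covers go ``down and down-left,'' an order ideal is determined by, for each antidiagonal (fixed value of $j - i$, i.e. fixed ``distance from the diagonal''), how far down the ideal extends, and these cutoffs must be weakly compatible between adjacent antidiagonals in exactly the way that defines a monotone lattice path. Concretely I would exhibit a bijection between order ideals of $\cP_n$ and Dyck-type paths of semilength $n+1$ — for instance lattice paths from $(0,0)$ to $(n+1, n+1)$ staying weakly below the diagonal, or equivalently subdiagonal staircases in an $(n+1)\times(n+1)$ grid — whose count is the Catalan number $C_{n+1} = \frac{1}{n+2}\binom{2n+2}{n+1}$. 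An alternative, perhaps slicker, presentation: note that $\cP_n$ is the poset of boxes of the staircase Young diagram $\delta_n = (n-1, n-2, \dots, 1, 0)$ or a shifted variant, and order ideals of (the box poset of) a staircase are classic Catalan objects; one can also just observe that $\Lambda$ from the superpotential definition — strict partitions with a part equal to $n$ inside the staircase — indexes precisely the ``principal-ish'' ideals appearing as $q$-terms, and generalize that indexing to all ideals.

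The main obstacle, and the only real content, is pinning down the lattice-path bijection cleanly: getting the off-by-one right so that ideals of a size-$\binom{n+1}{2}$ poset land in $C_{n+1}$ (not $C_n$) objects, and checking that the compatibility conditions between consecutive antidiagonal cutoffs are exactly the monotonicity of the path with no spurious constraints. I expect this to go smoothly once $\cP_n$ is redrawn as a staircase of boxes: an order ideal is then a ``staircase-convex'' subregion, which is recorded by a single monotone boundary path, and counting such paths in the appropriate $(n+1)$-step triangle gives $C_{n+1}$ by the standard ballot/reflection count or by the Catalan recursion. I would finish by remarking (or citing \cite{poset-poly}) that this also re-derives the vertex count of $\Gamma$ asserted just above the lemma, since vertices of a chain polytope correspond to antichains of the poset.
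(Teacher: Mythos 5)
Your proposal is correct and follows essentially the same route as the paper: pass from antichains to order ideals via the standard bijection, redraw $\cP_n$ as a staircase of boxes, and encode each order ideal by its boundary lattice path, a Dyck path of length $2n+2$, giving the count $C_{n+1}$. The only difference is presentational — the paper states the path bijection in one sentence while you flag the off-by-one bookkeeping as a step to verify — but the substance is identical.
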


\begin{proof}
  Antichains are in bijection with order ideals, and for this poset the set of order ideals is in bijection with the set of Dyck paths of length $2n+2$ (Draw the Hasse diagram of $\cP_n$, and represent each poset element by a box, so that the resulting picture is of a tilted staircase. Then the desired bijection is obtained by associating to an antichain the Dyck path which passes over only those boxes in the corresponding order ideal.) The number of such Dyck paths is $C_{n+1}$.
\end{proof}

Hence the superpotential polytope has $C_{n+1}$ many vertices. This is also the number of Young diagrams contained in the $n\times n$ square up to transpose, and hence the number of distinct Pl\"ucker coordinates for the Lagrangian Grassmannian $\mX$. This bijection is described in more detail later as part of the proof that the superpotential polytope coincides with the Newton-Okounkov body.

\begin{ex}
  We illustrate the bijection between Dyck paths and antichains. 
  \begin{figure}[h!]
    \centering
    \begin{tikzpicture}
      \node (0) at (0,0) {$\cdot$};
      \node (1) at (-1,-1) {$\cdot$};
      \node (2) at (1,-1) {$\cdot$};
      \node (3) at (-2,-2) {$\cdot$};
      \node (4) at (0,-2) {$\cdot$};
      \node (5) at (2,-2) {$\cdot$};
      \node (6) at (-3,-3) {$\cdot$};
      \node (7) at (-1,-3) {$\cdot$};
      \node (8) at (1,-3) {$\cdot$};
      \node (9) at (3,-3) {$\cdot$};
      \node (10) at (-4,-4) {$\cdot$};
      \node (11) at (-2,-4) {$\cdot$};
      \node (12) at (0,-4) {$\cdot$};
      \node (13) at (2,-4) {$\cdot$};
      \node (14) at (4,-4) {$\cdot$};

      \node (a11) at (0,-1) {$b_{11}$};
      \node (a12) at (1,-2) {\color{mypurp}$b_{12}$};
      \node (a13) at (2,-3) {\color{mypurp}$b_{13}$};
      \node (a22) at (-1,-2) {$b_{22}$};
      \node (a23) at (0,-3) {\color{mypurp}$b_{23}$};
      \node (a33) at (-2,-3) {$b_{33}$};
      
      \draw (0)--(2);
      \draw (4)--(1)--(0);
      \draw (7)--(3)--(1);
      \draw (6)--(3);
      \draw (4)--(8)--(5);
      \draw (8)--(13)--(9);
      \draw (7)--(12)--(8);
      \draw[dashed,mypurp] (10)--(6);
      \draw[dashed,mypurp] (9)--(14);
      \draw[dashed,mypurp] (6)--(11)--(7)--(4)--(2)--(5)--(9);
    \end{tikzpicture}
    \caption{Dyck path (dashed, purple) corresponding to the antichain $\{a_{12}\}$, with corresponding order filter marked in purple.} 
  \end{figure}
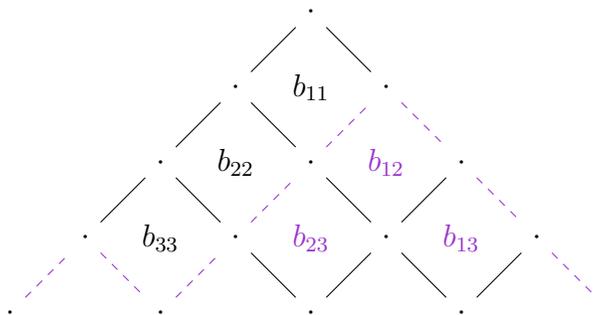
\end{ex}

\begin{lem}\label{super-volume} The number of linear extensions of $\cP_n$ is the degree of the Lagrangian Grassmannian $\mX$.
\end{lem}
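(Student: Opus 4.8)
The plan is to identify the number of linear extensions of $\cP_n$ with a known formula for $\deg\mX = \deg\LGr(n,2n)$ via a standard reflection-length/hook-length computation. Recall that $\cP_n$ is the poset on $\{b_{ij} : 1\le i\le j\le n\}$ with covers $b_{ij} \gtrdot b_{i+1,j+1}, b_{i,j+1}$; drawing its Hasse diagram tilts the staircase shape $\delta_n = (n, n-1, \dots, 1)$ forty-five degrees, so that linear extensions of $\cP_n$ are exactly standard Young tableaux of shifted shape $\delta_n$ (equivalently, of shifted staircase shape $(n, n-1, \dots, 1)$ with all distinct parts). Thus the first step is to observe $e(\cP_n) = g^{\delta_n}$, the number of standard shifted tableaux of the staircase, for which there is a shifted hook-length formula:
\[
  g^{\delta_n} = \frac{N!}{\prod_{(i,j)} h^*(i,j)},
\]
where $N = \binom{n+1}{2}$ and the $h^*$ are the shifted hook lengths.

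Second, I would recall the classical formula for the degree of the Lagrangian Grassmannian in its Plücker (minimal) embedding. Since $\mX = \Sp_n/P$ is cominuscule of type $C_n$ with $\dim\mX = N = \binom{n+1}{2}$, its degree equals $N!$ times the leading term of the corresponding Schubert-calculus computation; concretely, $\deg\LGr(n,2n)$ counts the number of standard tableaux of the shifted staircase $\delta_n$ as well — this is precisely the statement that the cohomology class of a point, expanded in the Schubert basis, has the Giambelli/Pieri structure of shifted shapes, and the degree is the number of saturated chains in the corresponding interval of the Bruhat order, i.e. $g^{\delta_n}$. Equivalently one can cite the hook-length expression $\deg\LGr(n,2n) = \binom{n+1}{2}! \cdot \prod_{1\le i\le j\le n}\frac{1}{i+j-1}$ (the shifted-staircase hook product is $\prod_{i\le j}(i+j-1) = \prod_{k=1}^{2n-1} k^{\#\{(i,j): i+j-1=k,\ i\le j\}}$, matching the $h^*(i,j)$).

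Third, I would match the two hook products directly: the hook length of the cell $(i,j)$ (with $i\le j$) in the shifted staircase $\delta_n$, under the bijection between poset elements $b_{ij}$ and cells, is exactly $i+j-1$ — this is a short combinatorial check using the "shifted hook" (arm $+$ leg, with the leg turning the corner along the shifted diagonal). Plugging this in gives $e(\cP_n) = \binom{n+1}{2}! / \prod_{i\le j}(i+j-1) = \deg\LGr(n,2n)$, completing the proof.

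The main obstacle is getting the bookkeeping of the shifted hook lengths exactly right: in a shifted diagram the hook of a cell on or near the main diagonal wraps around, so $h^*(i,j)$ is not simply arm $+$ leg $+1$ but includes the extra "turned" portion, and one must verify carefully that it evaluates to $i+j-1$ for the staircase (equivalently, that the cover relations $b_{ij}\gtrdot b_{i+1,j+1}, b_{i,j+1}$ correctly encode the shifted-tableau column-and-diagonal strictness). An alternative that sidesteps the shifted hook-length formula entirely: one can instead give a sign-reversing/weight-preserving bijection between linear extensions of $\cP_n$ and the monomials counted by the degree (e.g. via the Newton--Okounkov body itself — the volume of $\Delta_{\text{co-rect}}$ is $\deg\mX$ by the main theorem, and by Stanley's \cite[Cor.~4.2]{poset-poly} the chain-polytope volume is $e(\cP_n)$, so once $\Delta_{\text{co-rect}}$ and $\Gamma$ are shown unimodularly equivalent the lemma is immediate); but since this lemma is invoked en route to that equivalence, I would keep the proof self-contained via the hook-length route.
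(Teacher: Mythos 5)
Your route has a genuine error at both ends of the chain of equalities. The covers of $\cP_n$ are $b_{ij}>b_{i,j+1}$ and $b_{ij}>b_{i+1,j+1}$; after the change of coordinates $(i,j)\mapsto(i,\,j-i+1)$ these become the ``one step right'' and ``one step down'' covers on the cells of the \emph{ordinary} staircase $(n,n-1,\dots,1)$, so linear extensions of $\cP_n$ are ordinary standard Young tableaux of the staircase --- not standard tableaux of the \emph{shifted} staircase, whose cell poset has covers $b_{ij}>b_{i,j+1}$ and $b_{ij}>b_{i+1,j}$ and is a genuinely different (much thinner) poset. Already for $n=2$ the poset $\cP_2$ has $2$ linear extensions, while the shifted staircase $(2,1)$ has only one standard shifted tableau. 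Correspondingly, the degree formula you invoke is not the degree of $\mX$: the shifted hook-length count $\binom{n+1}{2}!\,\prod_{i\le j}(i+j-1)^{-1}$ (your shifted hooks $i+j-1$ are in fact correct for the shifted staircase) evaluates to $1$ for $n=2$ and $2$ for $n=3$, whereas $\deg\LGr(2,4)=2$ and $\deg\LGr(3,6)=16$, the latter computed in \ref{no-body}. The correct classical statement is that $\deg\LGr(n,2n)$ is the number of \emph{ordinary} standard Young tableaux of staircase shape, i.e.\ $\binom{n+1}{2}!$ divided by the product of the (odd) ordinary hook lengths. So both of your intermediate identifications are false; they happen to misidentify the two sides as the same wrong quantity, so the final equality you would write down is the true lemma, but the argument does not establish it.

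The repair is the unshifted version of your first step, which is what the paper does: dualize, observe that a linear extension of the dual of $\cP_n$ is an assignment of $1,\dots,\binom{n+1}{2}$ to the cells of the ordinary staircase increasing along rows and down columns, i.e.\ a standard Young tableau of shape $(n,n-1,\dots,1)$, and then cite the classical fact (type-$C$ Schubert calculus, or A005118) that the number of such tableaux equals $\deg\mX$. Note also that your suggested alternative --- deducing the lemma from $\vol(\Delta_{\text{co-rect}})=\deg\mX$ together with Stanley's chain-polytope volume formula --- is circular here, as you acknowledge, since \ref{main-thm} uses this lemma to conclude $\delta_G=\Delta_{\text{co-rect}}$.
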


\begin{proof}
  For any linear extension $L$ of a poset $P$, we can consider $L$ itself as a poset, and take its dual $L'$. Then $L'$ is a linear extension of the dual poset $P'$, for if $x\le y$ in $P'$, then $x\ge y$ in $P$, and hence $x\ge y$ in $L$, so that $x\le y$ in $L'$. Hence it suffices to count the number of linear extensions of the dual of $\cP_n$. Such a linear extension is equivalent to assigning a distinct integer $c_x$ in $[\binom{n+1}{2}]$ to each element $x$ of $\cP_n$, such that if $x\ge y$ in $\cP_n$, then $c_x\le c_y$, with equality iff $x=y$. This is exactly a standard Young tableaux on the staircase Young diagram $(n,n-1,\dots, 1)$. Hence the number of linear extensions of $\cP_n$ is the number of standard Young tableaux on the staircase diagram $(n,n-1,\dots, 1)$. The number of these is the degree of $\mX$. (See A005118.) 
\end{proof}

Hence the superpotential polytope has volume equal to $\deg(\mX)$. By our choice of $D=D_n$ (sections of $\cO(rD)$ correspond to degree $r$ homogeneous polynomials) and $G$ (the valuation is full rank, for example from \ref{unim}), the volume of the Newton-Okounkov bodies constructed above should also be equal to $\deg(\mX)$ (see e.g. \cite[Cor. 3.2]{kk} or \cite{km}, noting that we are not using the normalized volume, so should disregard the normalizing factor of $\dim(\mX)!$). In particular, it is reasonable to expect that the superpotential polytopes and the Newton-Okounkov bodies constructed in this section should be unimodularly equivalent, and we will prove this in the following section. 

\section{$\Delta_{\text{co-rect}}\cong \Gamma$}
Now we show that for the seed $G=G^{\text{co-rect}}_n$ and corresponding valuation $\val_{\text{co-rect}}$, the Newton-Okounkov body $\Delta_{\text{co-rect}}$ and superpotential polytope $\Gamma$ defined above are unimodularly equivalent, i.e. that there is a lattice isomorphism sending one polytope to the other. Our strategy is as follows. First, we define a linear map $M_n:\mR^{\binom{n+1}{2}}\rightarrow \mR^{\binom{n+1}{2}}$ with integer entries. We show next that $M_n$ is unimodular, and finally that $M_R(\Gamma)=\Delta_{\text{co-rect}}$. 

Recall first the superpotential polytope $\Gamma\subset \mR^{\binom{n+1}{2}}$ with coordinates $(A_{ij})$ ordered lexicographically by the indices and the Newton-Okounkov body $\Delta_{\text{co-rect}} \subset \mR^{\binom{n+1}{2}}$ with coordinates $(p_\lambda)$, where $\lambda$ is the complement of a rectangle, ordered reverse lexicographically by the indices. 

Consider the Pl\"ucker coordinates $p_\lambda$ such that $(n-1)\times(n-1)\subseteq \lambda\subsetneq n\times n$ and there are at least as many boxes to the right of the main diagonal (upper left to lower right) as there are below. Note that there are $\binom{n+1}{2}$ such Pl\"ucker coordinates: for each pair $(0\le i\le j\le n-1)$, associate the Young diagram containing the $(n-1)\times (n-1)$ rectangle with $j$ additional boxes to the right of the diagonal, and $i$ additional boxes below the diagonal.

Form the $\binom{n+1}{2}\times \binom{n+1}{2}$ matrix $M_n$ whose columns are the valuations of the above Pl\"ucker coordinates, ordered as follows: order first by \emph{decreasing} order (i.e. $2n\le 2n-1\le\cdots\le 1$) in the last entry, then break ties by \emph{increasing} order in the first to last, second to last, etc. entries. On the level of Young diagrams, this corresponds to ordering first by increasing number of additional boxes below the main diagonal, and then by dexreasing number of additional boxes right of the main diagonal. When these diagrams are indexed by pairs $(i,j)$, the ordering is $(i,j)\le (i',j')$ if $i<i'$ or $i=i'$ and $j\ge j'$.

\begin{ex}
 For $n=3$, the Pl\"ucker coordinates (in order) indexing the columns of $M_3$ are: $(p_{126},p_{136},p_{236},p_{125},p_{135},p_{124})$ or Young diagrams, $(p_{\tiny\yng(3,3)},p_{\tiny\yng(3,2)},p_{\tiny\yng(2,2)},p_{\tiny\yng(3,3,1)},p_{\tiny\yng(3,2,1)},p_{\tiny\yng(3,3,2)})$, or pairs $((0,2),(0,1),(0,0),(1,2),(1,1),(2,2))$. The Pl\"ucker coordinates indexing the rows are: $(p_{156},p_{126},p_{145},p_{125},p_{124},p_{123})$ or in Young diagrams, $(p_{\tiny\yng(3)},p_{\tiny\yng(3,3)},p_{\tiny\yng(3,1,1)},p_{\tiny\yng(3,3,1)},p_{\tiny\yng(3,3,2)},p_{\tiny\yng(3,3,3)})$. $M_3$ is the following matrix:
  \[M_3=
    \left(
      \begin{array}{cccccc}
        1&1&2&0&0&0\\ 
        1&2&2&1&2&0\\ 
        1&1&1&0&0&0\\ 
        2&2&2&1&2&0\\ 
        1&1&1&1&1&0\\ 
        1&1&1&1&1&1
      \end{array}
    \right)
  \]
  This matrix is unimodular.
\end{ex}

\subsection{Unimodularity}
\begin{lem}\label{ul}
  The upper left $n\times n$ block of $M_n$ has the form:
  \[
    \left(
      \begin{array}{cccccc}
        1 & 1 & \cdots & 1 & 1 & 2\\
        1 & 1 & \cdots & 1 & 2 & 2\\
        \vdots & & \ddots & & & \vdots\\
        1 & 2 & \cdots & 2 & 2 & 2\\
        1 & 1 & \cdots & 1 & 1 & 1          
      \end{array}
    \right)=
    \begin{cases} 1 & i=n\\
      1 & j\le n-i, i\neq n\\
      2 & j> n-i, i\neq n
    \end{cases}
  \]  
\end{lem}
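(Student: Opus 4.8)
The plan is to compute, for each entry of the upper-left $n\times n$ block of $M_n$, the valuation $\val_{\text{co-rect}}(p_\lambda)_\mu$ using the explicit formula of the Proposition preceding Example~\ref{no-body}, namely $\val_{\text{co-rect}}(p_\lambda)_\mu = \mathrm{maxdiag}(\mu\backslash\lambda) + \mathrm{maxdiag}(\mu^T\backslash\lambda)$ when $\mu\neq\mu^T$ and $\mathrm{maxdiag}(\mu\backslash\lambda)$ when $\mu=\mu^T$. Here the row index $\mu$ ranges over the $n$ face-label partitions indexing the first $n$ rows of $M_n$, and the column index $\lambda$ over the first $n$ column-partitions of $M_n$. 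I first need to pin down exactly which partitions these are. The rows of $M_n$ are indexed by the complements-of-rectangles face labels of $G^{\text{co-rect}}_n$ that appear along the antidiagonal of the quiver; concretely, for $n=3$ these were $\yng(3)$, $\yng(3,3)$, $\yng(3,1,1)$, $\yng(3,3,1)$, $\yng(3,3,2)$, $\yng(3,3,3)$, and for general $n$ the first $n$ of these (the ``smallest'' ones) are the face labels I must identify precisely — they are the co-rectangles $\mu$ closest to the empty diagram. The first $n$ columns are indexed by the diagrams $\lambda$ with $(n-1)\times(n-1)\subseteq\lambda\subsetneq n\times n$ having $i=0$ extra boxes below the diagonal and $j=0,1,\dots,n-1$ extra boxes to the right (ordered by decreasing $j$, so $j = n-1, n-2, \dots, 1, 0$).

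Next I would carry out the skew-shape/diagonal bookkeeping. For the relevant $\lambda$ (which all contain the $(n-1)\times(n-1)$ square), the skew shape $\mu\backslash\lambda$ and its transpose $\mu^T\backslash\lambda$ are small, easily-described shapes, and $\mathrm{maxdiag}$ of each is just the length of the longest slope-$(-1)$ diagonal, which one reads off directly. The key structural observations I expect to need are: (i) the last row, corresponding to $i=n$ (the symmetric face label, the full square minus nothing relevant, i.e. $\mu = n\times n$ restricted appropriately — in the $n=3$ case this is $\yng(3)$, so really $\mu$ is the $1\times n$ or $n\times 1$ strip), gives $\mathrm{maxdiag}(\mu\backslash\lambda) = 1$ for every $\lambda$ in this range because $\mu\backslash\lambda$ is a single box or a horizontal strip with no two boxes on a common antidiagonal; (ii) for the rows with $i\neq n$, whether the entry is $1$ or $2$ is governed by whether $\lambda$ extends far enough to the right to ``catch'' both $\mu\backslash\lambda$ and $\mu^T\backslash\lambda$ on a common antidiagonal — this produces the threshold at $j = n-i$. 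Assembling these cases gives exactly the stated piecewise description, and one checks the resulting matrix against the displayed $n=3$ instance.

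The main obstacle is the bookkeeping of exactly which face-label partitions $\mu$ index the first $n$ rows and verifying the $\mathrm{maxdiag}$ computation uniformly in $n$: I need a clean closed form for $\mathrm{maxdiag}(\mu\backslash\lambda)$ and $\mathrm{maxdiag}(\mu^T\backslash\lambda)$ as functions of the pair $(i,j)$ labelling the column and the index of the row, and then to see that the sum is $1$ in the regime $j\le n-i$ (for $i\neq n$) and jumps to $2$ once $j>n-i$. This is essentially a careful but elementary diagram-chasing argument; the only place care is genuinely required is handling the boundary/symmetric cases ($\mu=\mu^T$, and the $i=n$ row) where the formula for $\val_{\text{co-rect}}$ changes shape, so that the ``doubling'' from the transpose term does not occur. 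Once those are dispatched the lemma follows by direct substitution.
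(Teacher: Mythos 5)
Your overall route---pin down the $n$ row labels and $n$ column labels of the block and evaluate every entry with the maxdiag formula, treating the self-transpose label separately---is the same as the paper's, and your identification of the columns (the diagrams with pairs $(0,n-1),(0,n-2),\dots,(0,0)$, i.e. $\lambda_j=(n^{n-j},(n-1)^{j-1})$) is correct. However, the concrete content of your two ``structural observations,'' which is exactly where the substance of this lemma lies, is wrong. The last row of the block ($i=n$) is \emph{not} the strip $1\times n$ (for $n=3$ that is the partition $(3)$, which labels the \emph{first} row and is not self-transpose); it is the hook $(n,1^{n-1})$, the unique self-transpose label among the first $n$ row labels, while row $i$ for $i<n$ is the rectangle $i\times n$. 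This matters: the reason the last row is constantly $1$ is that for a symmetric $\mu$ the transpose term is absent and $\mathrm{maxdiag}\bigl((n,1^{n-1})\backslash\lambda_j\bigr)=1$; if that row were the strip, as you assert, the non-symmetric formula would produce $1,\dots,1,2$, i.e. the first row of the block, and your argument would output the wrong matrix.

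Your proposed mechanism for the $1$-versus-$2$ threshold is also not how the computation goes. For $i\neq n$ the two maxdiag terms are computed on separate skew shapes and added; there is no question of ``catching both on a common antidiagonal.'' Rather, $\mathrm{maxdiag}\bigl((i^{\,n})\backslash\lambda_j\bigr)=1$ identically (every $\lambda_j$ contains $(n-1)\times(n-1)$ but has only $n-1$ parts, so the tall rectangle always sticks out by a horizontal strip), while $\mathrm{maxdiag}\bigl((n^{\,i})\backslash\lambda_j\bigr)$ is the indicator of $i\times n\not\subseteq\lambda_j$, i.e. of $j>n-i$, since $\lambda_j$ has exactly $n-j$ rows of length $n$; summing gives the stated $1/2$ pattern. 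Finally, note that $\mathrm{maxdiag}$ counts boxes along diagonals of slope $-1$ (constant $j-i$, running down and to the right), not antidiagonals: for instance $(n,1^{n-1})\backslash\bigl((n-1)^{n-1}\bigr)$ consists of two boxes lying on a common antidiagonal yet has $\mathrm{maxdiag}=1$, and the lemma needs that entry to be $1$. With the row labels corrected and the threshold computed as above, your plan does reduce to the paper's proof, but as written the key identifications would fail.
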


\begin{proof} By our choice of ordering for Pl\"ucker coordinates indexing the columns, the $k^{th}$ column ($1\le k\le n$) is the valuation of the Pl\"ucker coordinate $p_{I_k}$, where $I_k=[n]\backslash(n-k+1) \cup \{2n\}$, corresponding to the pair $(0,n-k)$ or the Young diagram $\lambda_k=(n^{n-k},(n-1)^{k-1})$. The $k^{th}$ row corresponds to the $k\times n$ rectangle for $1\le k\le n-1$, and the hook $(n,1^{n-1})$ for $k=n$, and we will call this diagram $\mu_k$. Using the maxdiag formula:
  \[\val_{\text{co-rect}}(p_{\lambda_j})_{\mu_i}=
    \begin{cases}
      \mathrm{maxdiag}((n,1^{n-1})\backslash \lambda_j)=1 & i=n\\
      \mathrm{maxdiag}((i^n)\backslash \lambda_j) + \mathrm{maxdiag}((n^i)\backslash \lambda_j) & i\neq n\\
    \end{cases}    
  \]
  Let $i\neq n$. Since $(n-1)\times (n-1)\subset \lambda_j$, and $\lambda_j$ has only $n-1$ parts, then $\mathrm{maxdiag}((i^n)\backslash \lambda_j)=1$. On the other hand,
  \[(n^i)\backslash \lambda_j=
    \begin{cases}
      \varnothing & n-j\ge i\\
      (1^{i-(n-j)}) & n-j< i
    \end{cases}
  \]
  In particular,
  \[\val_{\text{co-rect}}(p_{\lambda_j})_{\mu_i} =
    \begin{cases} 1 & i=n\\
      1+\mathrm{maxdiag}((n^i)\backslash \lambda_j) & i\neq n\\
    \end{cases}=
    \begin{cases} 1 & i=n\\
      1+0=1 & n-j\ge i, i\neq n\\
      1=1=2 & n-j<i, i\neq n
    \end{cases}
  \]
\end{proof}

Applying the matrix with diagonal entries all $1$, super-diagonal entries $-1$, and all other entries $0$, followed by the matrix with diagonal entries all $1$, all non-diagonal entries in the first column $-1$, and all other entries $0$, both on the right, sends the upper left $n\times n$ block of $M_n$ to a permutation matrix (corresponding to the longest word). Each of these is a unimodular transformation, so the upper left $n\times n$ block of $M_n$ is unimodularly equivalent to $\Id_n$. We denote this sequence of unimodular transformations by $T_n$. In particular, the upper left block of $M_n$ is unimodular. 

\begin{lem}\label{lr}
  The lower right $\binom{n}{2}\times\binom{n}{2}$ block of $M_n$ is $M_{n-1}$. 
\end{lem}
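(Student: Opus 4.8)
The plan is to set up an explicit bijection between the index sets of the lower-right block of $M_n$ and the rows/columns of $M_{n-1}$, and then to check that the corresponding matrix entries agree via the $\mathrm{maxdiag}$ formula. First I would identify the Pl\"ucker coordinates indexing the last $\binom{n}{2}$ columns of $M_n$. By the ordering convention, these are exactly the ones coming from pairs $(i,j)$ with $1 \le i \le j \le n-1$ (i.e.\ at least one additional box below the main diagonal), and on the level of Young diagrams these are the $\lambda$ with $(n-1)\times(n-1) \subseteq \lambda \subsetneq n\times n$ that have at least one row of length $< n$ and at least two rows of length $n$ — in other words $\lambda$ has exactly $n$ rows. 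Writing such a $\lambda$ as $\lambda = (n^{a}, (n-1)^{b}, (\text{extra box data}))$, I would exhibit the map that deletes the first row and the first column of $n\times n$, landing in the $(n-1)\times(n-1)$ picture; the claim is that this carries the column-index set of the lower-right block of $M_n$ bijectively onto the column-index set of $M_{n-1}$, and likewise for the row-index sets (the rows of the lower-right block being indexed by $\mu_i$ for $i$ large, which are the $\mu$ containing $(n-1)\times(n-1)$ but not contained in any $k\times n$ with $k$ small).

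Second, I would verify that this bijection is compatible with the two orderings — i.e.\ that the induced order on pairs $(i,j)$ for the $M_n$ block matches the order used for $M_{n-1}$ (order by increasing $i$, then decreasing $j$, after the shift). This is a purely combinatorial check on the ordering rule stated just before the Example and should be immediate once the bijection is written down.

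Third, and this is the technical heart, I would show that for $\lambda$, $\mu$ in the relevant ranges the entry $\val_{\text{co-rect}}(p_\lambda)_\mu$ of $M_n$ equals $\val_{\text{co-rect}}(p_{\bar\lambda})_{\bar\mu}$ of $M_{n-1}$, where $\bar{}$ denotes the shifted diagram. Using the Proposition's formula, each entry is a sum of one or two $\mathrm{maxdiag}$ terms of skew shapes $\mu\backslash\lambda$ (and $\mu^T\backslash\lambda$ when $\mu\ne\mu^T$). The key point to establish is that stripping the top row and left column of the ambient $n\times n$ square does not change the value of $\mathrm{maxdiag}$ of these skew shapes: since every relevant $\lambda$ contains $(n-1)\times(n-1)$ and every relevant $\mu$ contains $(n-1)\times(n-1)$ as well, the boxes of $\mu\backslash\lambda$ all lie in the last row, last column, or the ``frame'' region, and one checks that no antidiagonal of slope $-1$ loses boxes under the shift — the deleted first row and first column meet any antidiagonal in at most the boxes that are forced to be in $\lambda$ already, hence not in $\mu\backslash\lambda$. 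I also need to confirm the transpose bookkeeping is preserved: $\mu = \mu^T$ iff $\bar\mu = \bar\mu^T$ for $\mu$ in the relevant range (the shift commutes with transposition on these diagrams), so the case split in the formula matches.

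The main obstacle I anticipate is the third step: carefully pinning down the shape of $\mu\backslash\lambda$ (and $\mu^T\backslash\lambda$) for all the $\mu,\lambda$ occurring in the lower-right block and checking the $\mathrm{maxdiag}$ invariance uniformly, rather than case by case. A clean way to handle this is to observe that for $\lambda,\mu \supseteq (n-1)\times(n-1)$, the skew shape $\mu\backslash\lambda$ is contained in the ``hook-shaped frame'' $n\times n \setminus (n-1)\times(n-1)$, and any such frame has all its antidiagonals of length $\le 1$ except possibly through the corner box $(1,1)$ after the shift; tracking that single corner box is what makes the identity hold. Once this local analysis is in hand, the equality of matrices is formal, and the lemma follows by induction-friendly packaging (it is exactly the statement that feeds the recursive unimodularity argument completing the proof that $M_n$ is unimodular).
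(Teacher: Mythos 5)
Your overall route is the same as the paper's: strip the first row and first column of the ambient square (equivalently, the upper-left hook $(n,1^{n-1})$) from both the column labels $\lambda$ and the row labels $\mu$ of the lower-right block, match up index sets and orderings, and check the entries are unchanged. However, two of your supporting claims are false, and your invariance step is justified by the wrong containment. For the columns, it is not true that the relevant $\lambda$ have ``at least two rows of length $n$'': the pair $(1,1)$ gives $\lambda=(3,2,1)$ for $n=3$, which has only one such row (your restatement ``exactly $n$ rows'' is the correct criterion). More seriously, the rows of the lower-right block are \emph{not} all $\mu$ containing $(n-1)\times(n-1)$: for $n=4$ the labels $\mu=(4,4,1,1)$ and $\mu=(4,4,2,2)$ (complements of the $2\times3$ and $2\times2$ rectangles) occur among the last $\binom{4}{2}$ rows and do not contain $(3,3,3)$. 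Your third step invokes exactly this false containment, and for $\lambda$ the hypothesis $\lambda\supseteq(n-1)\times(n-1)$ is also insufficient for the claim you need: it does not force the corner boxes $(1,n)$ and $(n,1)$ into $\lambda$, and if such a box lay in $\mu\backslash\lambda$ it would be lost upon deleting the first row and column, changing $\mathrm{maxdiag}$.

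The property that actually holds, and is all that is needed, is hook containment: every $\lambda$ indexing the last $\binom{n}{2}$ columns has $i\ge 1$, hence $j\ge 1$, so its first row and first column are full, i.e.\ $\lambda\supseteq(n,1^{n-1})$; and the last $\binom{n}{2}$ rows are precisely the face labels whose complementary rectangles fit strictly inside $(n-1)\times(n-1)$, so these $\mu$ (and their transposes) also contain $(n,1^{n-1})$, and transposition commutes with removing the hook, so the case split $\mu=\mu^T$ versus $\mu\neq\mu^T$ is preserved. Once both $\mu$ and $\lambda$ contain the hook, the skew shapes $\mu\backslash\lambda$ and $\mu^T\backslash\lambda$ contain no boxes of the first row or column at all, so they are literally the same sets of boxes before and after the deletion; no antidiagonal or ``frame'' analysis is required. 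With this correction, your bijection-of-labels and ordering check reduces to the paper's short argument, so the gap is repairable but the proof as written does not go through.
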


\begin{proof}
  By our choice of ordering for the Pl\"ucker coordinates indexing the columns, the corresponding Young diagrams (indexing the columns in this block) all contain the hook partition $(n,1^{n-1})$ in the upper left. Similarly, by our choice of coordinates for the valuations, the corresponding Young diagrams (indexing the rows in this block) also all contain the hook $(n,1^{n-1})$ in the upper left.

  Hence, to compute the valuations $\val_{\text{co-rect}}(p_\lambda)_\mu$ (e.g. using the maxdiag formula) in this block, we can remove all the upper left hooks $(n,1^{n-1})$ from the corresponding $\lambda$ and $\mu$. The resulting matrix is exactly a copy of $M_{n-1}$.
\end{proof}

Furthermore, since all of the Pl\"ucker coordinates indexing the last $\binom{n}{2}$ columns of $M_n$ contain the hook $(n,1^{n-1})$, then in particular $\val_{\text{co-rect}}(p_{I_j})_{(n,1^{n-1})}=0$ for any $I_j$ in these columns. In other words, the bottom row of the upper right $n\times\binom{n}{2}$ submatrix of $M_n$ is the $0$ vector.

\begin{lem}\label{ll}
  The lower left $\binom{n}{2}\times n$ block of $M_n$ has all columns equal and nonzero. 
\end{lem}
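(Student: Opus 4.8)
The plan is to show that the lower left block is the matrix every column of which equals a single fixed vector $v\in\mR^{\binom{n}{2}}$, namely the vector whose coordinate indexed by a row-label $\mu$ is $1$ if $\mu=\mu^T$ and $2$ if $\mu\neq\mu^T$; since all its coordinates lie in $\{1,2\}$, the vector $v$ is nonzero. The conceptual reason the columns coincide is that, although they come from the genuinely different Pl\"ucker coordinates $p_{\lambda_1},\dots,p_{\lambda_n}$, the skew sets whose $\mathrm{maxdiag}$ computes these valuations are all squeezed into the last row and last column of the ambient $n\times n$ square, where no two boxes can share a diagonal of slope $-1$.

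Concretely, I would first recall the labels involved. From the proof of Lemma \ref{ul}, the first $n$ columns of $M_n$ are the valuation vectors $\val_{\text{co-rect}}(p_{\lambda_k})$ with $\lambda_k=(n^{n-k},(n-1)^{k-1})$ for $1\le k\le n$; each such $\lambda_k$ has exactly $n-1$ parts, every part equal to $n-1$ or $n$, so $(n-1)\times(n-1)\subseteq\lambda_k\subseteq (n^{n-1})$. From the proof of Lemma \ref{lr}, the $\binom{n}{2}$ row-labels of the lower left block are co-rectangles $\mu$ containing the hook $(n,1^{n-1})$; hence each such $\mu$ has exactly $n$ rows (so $\mu_n\ge 1$) and $\mu_1=n$, and therefore $\mu^T$ also has exactly $n$ rows and $\mu^T_n=\#\{i:\mu_i=n\}\ge 1$.

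The crux is the following observation: if $\nu\subseteq n\times n$ has $\nu_n\ge 1$ and $\lambda$ satisfies $(n-1)\times(n-1)\subseteq\lambda\subseteq (n^{n-1})$, then $\mathrm{maxdiag}(\nu\setminus\lambda)=1$. Indeed, since $\lambda\supseteq (n-1)\times(n-1)$, no box $(i,j)$ with $i\le n-1$ and $j\le n-1$ lies in $\nu\setminus\lambda$, so $\nu\setminus\lambda$ is contained in the union of the $n$-th row and the $n$-th column of $\nu$; and since $\lambda$ has at most $n-1$ rows, the entire $n$-th row of $\nu$ lies in $\nu\setminus\lambda$, which is therefore nonempty. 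A box $(n,j)$ of the $n$-th row lies on the slope-$-1$ diagonal $\{(i',j'):j'-i'=j-n\}$ with $j-n\le 0$, while a box $(i,n)$ with $i\le n-1$ lies on the diagonal with index $n-i\ge 1$; thus these two families occupy disjoint slope-$-1$ diagonals, and within either family the indices are pairwise distinct, so no slope-$-1$ diagonal meets $\nu\setminus\lambda$ in more than one box. Hence $\mathrm{maxdiag}(\nu\setminus\lambda)=1$. Applying this with $\nu=\mu$ and with $\nu=\mu^T$ and feeding the result into the $\mathrm{maxdiag}$ formula for $\val_{\text{co-rect}}$ gives $\val_{\text{co-rect}}(p_{\lambda_k})_\mu=1$ when $\mu=\mu^T$ and $=1+1=2$ when $\mu\neq\mu^T$, with no dependence on $k$. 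This is exactly the vector $v$ above, so all $n$ columns of the block equal $v$, and $v\neq 0$.

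I do not expect a genuine obstacle here; the two points that need care are (i) correctly extracting from Lemmas \ref{ul} and \ref{lr} that the column-labels $\lambda_k$ are trapped between $(n-1)\times(n-1)$ and $(n^{n-1})$ while every row-label of this block contains the hook $(n,1^{n-1})$ (hence the needed facts about $\mu$ and $\mu^T$), and (ii) keeping straight that a diagonal of slope $-1$ is a set $\{(i,j):j-i=c\}$, so that one fixed row and one fixed column each meet every such diagonal in at most one box.
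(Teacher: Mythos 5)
Your proof is correct and follows essentially the same route as the paper: identify the column labels as the $\lambda_k$ squeezed between $(n-1)\times(n-1)$ and $(n^{n-1})$, the row labels as co-rectangles containing the hook $(n,1^{n-1})$, and then show $\mathrm{maxdiag}(\mu\setminus\lambda_k)=\mathrm{maxdiag}(\mu^T\setminus\lambda_k)=1$ so the valuation formula gives $1$ or $2$ independently of $k$. Your diagonal-disjointness argument simply spells out the step the paper asserts without detail.
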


\begin{proof}
  By choice of ordering on the valuation coordinates, the coordinates $\mu_i$ indexing these rows are all complements of rectangles $i\times j$ with both $i,j< n$. Since the Pl\"ucker coordinates $p_{\lambda_j}$ indexing the columns all contain $(n-1)\times (n-1)$ and have only $n-1$ parts, we always have $\mathrm{maxdiag}(\mu_i\backslash \lambda_j)=1$ independent of the Pl\"ucker coordinate $p_{\lambda_j}$ indexing the column. In particular, when we compute the valuation, we get
  \[\val_{\text{co-rect}}(p_{\lambda_j})_{\mu_i} =
    \begin{cases}
      \mathrm{maxdiag}(\mu_i\backslash \lambda_j) = 1 & \mu_i = \mu_i^T\\
      \mathrm{maxdiag}(\mu_i\backslash \lambda_j) + \mathrm{maxdiag}(\mu_i^T\backslash \lambda_j)= 1 + 1 = 2 & \mu_i\neq \mu_i^T
    \end{cases}
  \]
  independently of $j$. Hence all columns of this block are equal. 
\end{proof}

Note in particular that the $T_n$ described above applied on the right to the lower left $\binom{n}{2}\times n$ block of $M_n$ only gives nonzero entries in the last column, and furthermore the entries are given by $1$ if the row index is invariant under transpose and $2$ otherwise. 

\begin{prop}\label{unim}
  $M_n$ is unimodular. 
\end{prop}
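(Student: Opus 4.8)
The plan is to prove unimodularity of $M_n$ by induction on $n$, exploiting the block structure just established. By Lemmas~\ref{ul}, \ref{lr}, and \ref{ll}, together with the observation that the bottom row of the upper right $n \times \binom{n}{2}$ block is zero, the matrix $M_n$ has the schematic form
\[
M_n = \begin{pmatrix} U & \ast \\ L & M_{n-1} \end{pmatrix},
\]
where $U$ is the $n\times n$ upper left block of Lemma~\ref{ul}, $L$ is the $\binom{n}{2}\times n$ lower left block whose columns are all equal (Lemma~\ref{ll}), and the $\ast$ block has its last row equal to zero. First I would apply the unimodular transformation $T_n$ (acting on the right on the first $n$ columns only, extended by the identity on the remaining $\binom{n}{2}$ columns) described after Lemma~\ref{ul}. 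This replaces $U$ by a permutation matrix $\Pi$ (the longest-word permutation on $n$ letters), and replaces $L$ by $L' = L\,T_n$, which by the remark following Lemma~\ref{ll} is supported only in its last column, with entries $1$ or $2$ according to whether the row index is transpose-invariant. The $\ast$ block is unaffected in its last row (still zero) since $T_n$ only mixes the first $n$ columns.

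Next I would perform column operations using the permutation block $\Pi$ to clear out $L'$. Since $\Pi$ is a permutation matrix, there is a unique column among the first $n$, say column $c_0$, whose nonzero entry sits in the last row of the top block — but more to the point, for the single nonzero column of $L'$ (its last, the $n$-th transformed column) there is a column of $\Pi$ whose $1$ lies in exactly the right position to subtract integer multiples and zero out every entry of $L'$; these are integer column operations, hence unimodular. Crucially, because the last row of the $\ast$ block is zero, these column operations do not disturb the structure needed, and after they are performed the matrix is block lower triangular (or, after a harmless reindexing, block diagonal) with diagonal blocks $\Pi$ and $M_{n-1}$. Actually the cleanest route: after applying $T_n$, expand the determinant along the first $n$ columns. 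Because $U T_n = \Pi$ is a permutation matrix, each of the first $n$ columns has a single $1$ in the top block; Laplace expansion along these columns picks out exactly $\det(\Pi) \cdot \det(M_{n-1})$ up to sign (the entries of $L'$ and $\ast$ contribute nothing once we expand along the unit columns, since any generalized-Laplace term using a row from outside the top block in one of the first $n$ columns forces a repeated column selection or is killed). Hence $\det M_n = \pm \det M_{n-1}$, and by induction, with the base case $\det M_1 = 1$ (the $1\times 1$ matrix $(1)$, since for $n=1$ there is a single Plücker coordinate of the relevant type), we conclude $\det M_n = \pm 1$.

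The main obstacle I anticipate is making the Laplace/column-elimination step fully rigorous: one must verify that after applying $T_n$ the interaction between the transformed lower left block $L'$ and the upper right block $\ast$ genuinely does not obstruct the triangularization — that is, that the single nonzero column of $L'$ can be cleared using a column of the permutation block whose pivot lies in a row where the $\ast$ block vanishes (which is why the vanishing of the last row of $\ast$, noted after Lemma~\ref{lr}, is exactly the input needed). I would state this as a short linear-algebra lemma: if a square integer matrix has the form $\left(\begin{smallmatrix} \Pi & B \\ C & D\end{smallmatrix}\right)$ with $\Pi$ a permutation matrix, $C$ supported in a single column, and the corresponding pivot row of $B$ equal to zero, then $\det = \pm\det D$. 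Everything else is bookkeeping: tracking the orderings of rows and columns carefully enough that the recursive block in Lemma~\ref{lr} really is $M_{n-1}$ with the same conventions, which the proof of that lemma already handles.
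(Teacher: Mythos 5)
Your proposal is essentially correct and follows the same skeleton as the paper's proof (same block decomposition from Lemmas~\ref{ul}, \ref{lr}, \ref{ll}, same transformation $T_n$, same use of the vanishing bottom row of the upper right block), but it finishes differently. The paper runs a strengthened induction: it shows $M_n$ is column-equivalent to a lower triangular matrix with $1$'s on the diagonal, which it needs because the inductive transformation $T$ for $M_{n-1}$ is re-used inside the big matrix and the argument must ensure that only column operations are performed and that the zero row of the upper right block survives, so that this block can then be cleared using the first $n-1$ identity columns without touching the lower right block. You instead stop at the determinant: after applying $T_n$ to the first $n$ columns you invoke a generalized Laplace expansion to get $\det M_n=\pm\det M_{n-1}$, needing only $\det M_{n-1}=\pm1$ as the inductive hypothesis, which is a slightly lighter induction. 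Your ``short linear-algebra lemma'' (permutation block $\Pi$, lower left block supported in a single column, and the pivot row of that column having zero row in the upper right block forces $\det=\pm\det D$) is exactly the right statement, and its hypothesis does hold here: the only cross terms in the Laplace expansion pair a minor using a lower row of $L'$ with a complementary minor containing the pivot row of the special column restricted to the upper right block, and that row is the zero row. One bookkeeping correction: with $T_n$ taken literally as the two matrices described after Lemma~\ref{ul}, the upper left block becomes the longest-word (antidiagonal) permutation and $L\,T_n$ is supported in the \emph{first} column, whose pivot is row $n$; with the convention actually used in the paper's proof of the proposition (where $T_n$ includes the final permutation so the upper left becomes $\Id_n$), $L'$ sits in the \emph{last} column, again with pivot row $n$. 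Either way the pivot row of the column supporting $L'$ is row $n$, which is precisely the zero row of the upper right block, so the alignment you flagged as the main obstacle is automatic and your argument goes through.
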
 

\begin{proof}
  We proceed by induction, proving a slightly stronger statement: $M_n$ is unimodularly equivalent by only column operations to a lower triangular matrix with $1$'s on the main diagonal. When $n=1$, $M_n$ is a $\binom{2}{2}\times\binom{2}{2}=1\times 1$ matrix. The only Pl\"ucker coordinate we consider is $p_{\varnothing}=p_2$. The valuation is in the coordinate $p_{\tiny\yng(1)}=p_1$. By either the flow model or the max-diag formula, we see that $\val(p_\varnothing)_{\tiny\yng(1)}=1$. Hence $M_1=1$.

  Now let $n>1$. By induction, we have some unimodular transformation $T$ consisting of only column operations such that $M_{n-1}T$ is lower triangular with $1$'s on the main diagonal. Now, apply the block diagonal (hence unimodular, since both of the blocks are unimodular) transformation with $T_n$, which only used column operations, in the upper left $n\times n$ block and $T$ in the lower right $\binom{n}{2}\times\binom{n}{2}$ block to $M_n$ on the right to get $M'=M_n\mathrm{diag}(T_n,T)$. 

  The upper left $n\times n$ block of $M'$ is $\Id_n$ by the discussion immediately following \ref{ul}. The lower left $\binom{n}{2}\times n$ block of $M'$ is nonzero in only the last column and zero everywhere else by the discussion immediately following \ref{ll}. Finally, the lower right $\binom{n}{2}\times \binom{n}{2}$ block of $M'$ is lower triangular with $1$'s on the main diagonal by induction and \ref{lr}. Furthermore, since we have only used column operations on the first $n$ columns and the last $\binom{n}{2}$ columns independently, the $0$ bottom row of the upper right $n\times\binom{n}{2}$ submatrix (see the discussion immediately following \ref{lr}) is preserved. Hence, we can perform further column operations to zero out the upper right $n\times \binom{n}{2}$  submatrix without affecting the lower right $\binom{n}{2}\times \binom{n}{2}$ submatrix. Composing these gives us the desired unimodular transformation. 
\end{proof}

\begin{ex} $M_3$ and $M_2$ are:
  \[M_3=
    \left(
      \begin{array}{cccccc}
        1&1&2&0&0&0\\ 
        1&2&2&1&2&0\\ 
        1&1&1&0&0&0\\ 
        2&2&2&1&2&0\\ 
        1&1&1&1&1&0\\ 
        1&1&1&1&1&1
      \end{array}
    \right),\quad M_2=
    \left(
      \begin{array}{ccc}
        1&2&0\\
        1&1&0\\
        1&1&1
      \end{array}
    \right)
  \]

  Note first that the lower right $\binom{3}{2}\times \binom{3}{2}$ matrix of $M_3$ is $M_2$, the lower left $\binom{3}{2}\times 3$ matrix has rank $1$, and the bottom row of the upper right $3\times \binom{3}{2}$ matrix is $0$. The verifications of these facts exactly follow the proofs of the lemmas. Analyzing the upper left $3\times 3$ matrix (the arrows indicate applying matrices on the right)
  \[
    \left(
      \begin{array}{ccc}
        1&1&2\\
        1&2&2\\
        1&1&1
      \end{array}
    \right)\xrightarrow{A}
    \left(
      \begin{array}{ccc}
        1&0&1\\
        1&1&0\\
        1&0&0
      \end{array}
    \right)\xrightarrow{B}
    \left(
      \begin{array}{ccc}
        0&0&1\\
        0&1&0\\
        1&0&0
      \end{array}
    \right)\xrightarrow{C}
    \left(
      \begin{array}{ccc}
        1&0&0\\
        0&1&0\\
        0&0&1
      \end{array}
    \right)
  \]
  where the matrices $A,B,C$ are given by
  \[A=
    \left(
      \begin{array}{ccc}
        1&-1&0\\
        0&1&-1\\
        0&0&1
      \end{array}
    \right),
    B=
    \left(
      \begin{array}{ccc}
        1&0&0\\
        -1&1&0\\
        -1&0&1
      \end{array}
    \right),
    C=
    \left(
      \begin{array}{ccc}
        0&0&1\\
        0&1&0\\
        1&0&0
      \end{array}
    \right)
  \]
  and their product is:
  \[ABC=
    \left(
      \begin{array}{ccc}
        0&-1&2\\
        -1&1&0\\
        1&0&-1
      \end{array}
    \right)
  \]

  The matrix sending $M_2$ to the desired form constructed in the proposition is:
  \[
    \left(
      \begin{array}{ccc}
        1&2&0\\
        1&1&0\\
        1&1&1
      \end{array}
    \right)
    \left(
      \begin{array}{ccc}
        -1&2&0\\
        1&-1&0\\
        0&0&1
      \end{array}
    \right)=
    \left(
      \begin{array}{ccc}
        1&0&0\\
        0&1&0\\
        0&1&1
      \end{array}
    \right)
  \]

  Putting these together, we apply first the following transformation to $M_3$:
  \[\left(
      \begin{array}{cccccc}
        1&1&2&0&0&0\\ 
        1&2&2&1&2&0\\ 
        1&1&1&0&0&0\\ 
        2&2&2&1&2&0\\ 
        1&1&1&1&1&0\\ 
        1&1&1&1&1&1
      \end{array}
    \right)
    \left(
      \begin{array}{cccccc}
        0&-1&2&0&0&0\\ 
        -1&1&0&0&0&0\\ 
        1&0&-1&0&0&0\\ 
        0&0&0&-1&2&0\\ 
        0&0&0&1&-1&0\\ 
        0&0&0&0&0&1
      \end{array}
    \right)=
    \left(
      \begin{array}{cccccc}
        1&0&0&0&0&0\\ 
        0&1&0&1&0&0\\ 
        0&0&1&0&0&0\\ 
        0&0&2&1&0&0\\ 
        0&0&1&0&1&0\\ 
        0&0&1&0&1&1
      \end{array}
    \right)
  \]

and then we finish by zeroing out the remaining nonzero entry above the diagonal using the second column. Hence $M_3$ is unimodular. 
\end{ex}

\subsection{Surjectivity}
It remains to prove that $M_n(\Gamma)=\Delta_{\text{co-rect}}$. To aid in our proof, we define an auxiliary polytope $\delta_G$ in the same ambient space as $\Delta_{\text{co-rect}}$:
\begin{defn}
  $\delta_G = \conv(\{\val_{\text{co-rect}}(p_\lambda)\mid \lambda\in \binom{[2n]}{n}\})$.
\end{defn}

A priori $\delta_G\subset \Delta_{\text{co-rect}}$ (for our choice of $G$ and $D$, $\delta_G = \conv(\val_{\text{co-rect}}(H^0(\mX,\cO(D))))$ is the $r=1$ part of the Newton-Okounkov body). For general $G$, $\delta_G\subsetneq \Delta_{\text{co-rect}}$ (i.e. the Pl\"ucker coordinates may not form a Khovanskii basis), but in our case, we will show that $\delta_G=\Delta_{\text{co-rect}}$ by showing that $M_n(\Gamma)=\delta_G$, and computing volumes.

Recall that the vertices of $\Gamma$ are characteristic functions of antichains of $\cP_n$. For each singleton antichain $\{a_{ij}\}$ of $\cP_n$, we associate the vertex $v_{ij}\in \Gamma$, and the hook partition $\nu_{ij}=(n+1-i,1^{j-i})$. Because $j\le n$, then $n+1-i>j-i$. 

\begin{lem}\label{hook-poset-bij}
The map $\{a_{ij}\}\rightarrow \nu_{ij}$ between singleton antichains of $\cP_n$ and nonempty hook partitions $(a,1^b)\subset n\times n$ with $a>b$ described above is a bijection.
\end{lem}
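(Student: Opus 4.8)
The plan is to unwind both sides of the claimed correspondence into explicit sets of integer pairs and then write down the inverse map.

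First I would identify the domain: a singleton antichain of $\cP_n$ is just a choice of element $a_{ij}$, so the set of such antichains is in bijection with $D=\{(i,j): 1\le i\le j\le n\}$, which has $\binom{n+1}{2}$ elements. On the other side, a nonempty hook partition $(a,1^b)$ is determined by its arm length $a\ge 1$ and the number $b\ge 0$ of boxes below the first row (so it has $b+1$ rows); the containment $(a,1^b)\subseteq n\times n$ forces $a\le n$ and $b\le n-1$, and the hypothesis $a>b$ is imposed in addition. Since $b<a\le n$ already implies $b\le n-1$, the codomain is exactly $C=\{(a,b): 1\le a\le n,\ 0\le b,\ b<a\}$, which also has $\sum_{a=1}^{n}a=\binom{n+1}{2}$ elements, matching $|D|$.

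Next I would check that $\varphi\colon (i,j)\mapsto (a,b):=(n+1-i,\ j-i)$ sends $D$ into $C$: from $i\ge 1$ we get $a\le n$; from $i\le n$ we get $a\ge 1$; from $i\le j$ we get $b\ge 0$; and from $j\le n$ we get $b=j-i\le n-i=a-1<a$, which is precisely the inequality already noted in the excerpt just before the statement. Then I would exhibit the candidate inverse $\psi\colon (a,b)\mapsto (i,j):=(n+1-a,\ n+1-a+b)$ and verify it maps $C$ into $D$: $a\le n$ gives $i\ge 1$, $a\ge 1$ gives $i\le n$, $b\ge 0$ gives $j\ge i$, and $b<a$ gives $j=n+1-a+b\le n+1-a+(a-1)=n$.

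Finally, a one-line substitution shows $\psi\circ\varphi=\mathrm{id}_D$ and $\varphi\circ\psi=\mathrm{id}_C$, so $\varphi$ is a bijection; translating the pair $(a,b)$ back to the partition $(a,1^b)$ and recalling that $\nu_{ij}=(n+1-i,1^{j-i})$ identifies this with the map of the lemma. The main (and essentially only) point requiring care is bookkeeping: making sure the four defining inequalities of $D$ correspond correctly to the four defining inequalities of $C$ under the affine change of variables $(i,j)\leftrightarrow(n+1-i,\,j-i)$, which the checks above confirm; there is no substantive obstacle.
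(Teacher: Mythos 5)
Your proof is correct and follows essentially the same route as the paper: both identify the map on indices as $(i,j)\mapsto(n+1-i,\,1^{j-i})$ and exhibit the explicit inverse $i=n+1-a$, $j=n+1-a+b$, checking the defining inequalities on each side. Your version is just slightly more explicit (counting both sets and verifying both compositions), but there is no substantive difference.
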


\begin{proof}
  For the hook partition $(a,1^b)\subset n\times n$ where $a>b$, set $i=n+1-a$ and $j=n+1-a+b$. We have $i\le j$ and $1\le i\le j\le n$ as required. Conversely, if $\nu_{ij}=\nu_{kl}$, then $i=k$ and $j=l$. Hence the above map is a bijection. 
\end{proof}
  
We first identify where these vertices are sent under $M_n$, and then use that to identify which antichains correspond to which Pl\"ucker coordinate valuations. 

\begin{lem}\label{singleton-antichain}
  $M_n(v_{ij})=\val_{\text{co-rect}}(p_\lambda)$, where $\lambda$ is the complement of $\nu_{ij}$ in the $n\times n$ square, and the complement is taken by right-justifying $\nu_{ij}$ in the bottom right corner. 
\end{lem}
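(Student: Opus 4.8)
The plan is to compute both sides explicitly as functions of the pair $(i,j)$ and check they agree coordinate-by-coordinate, using the \texttt{maxdiag} formula of the earlier proposition for the right-hand side and the matrix $M_n$ together with the combinatorics of the columns for the left-hand side. First I would unwind the left-hand side: $v_{ij}$ is the characteristic vector of the singleton antichain $\{a_{ij}\}$, so $M_n(v_{ij})$ is exactly the column of $M_n$ indexed by $a_{ij}$ — but the columns of $M_n$ are not literally indexed by the $a_{ij}$; they are indexed by the Pl\"ucker coordinates $p_{\lambda_{i'j'}}$ with $(n-1)\times(n-1)\subseteq \lambda\subsetneq n\times n$, via the pairing $(i',j')\leftrightarrow$ ``$j'$ boxes right of the diagonal, $i'$ boxes below.'' So the real content is: under the lexicographic-vs-reverse-lex orderings fixed in Section 4, the antichain $\{a_{ij}\}$ sits in the same position in the list of antichains as the Pl\"ucker coordinate $p_{\lambda_{ij}}$ sits in the list of column-indexing coordinates, and hence $M_n(v_{ij})$ equals $\val_{\text{co-rect}}(p_{\lambda_{i'j'}})$ for the matching $(i',j')$. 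I would therefore make the index bookkeeping precise and show that the matching $(i,j)\mapsto(i',j')$ composed with $\lambda_{i'j'}\mapsto$ (its complement) is the same as $a_{ij}\mapsto\nu_{ij}\mapsto(\text{complement of }\nu_{ij})$ from Lemma~\ref{hook-poset-bij}; i.e. that the hook $\nu_{ij}=(n+1-i,1^{j-i})$ is precisely the transpose-symmetrized complement of $\lambda_{i'j'}$.

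The cleanest route to that is probably to avoid chasing orderings and instead verify the identity directly. Write $\lambda = \lambda(i,j)$ for the complement of $\nu_{ij}=(n+1-i,1^{j-i})$ inside $n\times n$, right-justified in the bottom-right corner; concretely $\lambda$ is obtained from the full $n\times n$ square by deleting a hook of arm $n+1-i$ and leg $j-i$ from the top-left corner, so $\lambda$ has first $j-i+1$ rows of length $n-(n+1-i)=i-1$... wait, more carefully: removing the hook $(n+1-i,1^{j-i})$ anchored at the top-left leaves the complement, and right-justifying means $\lambda$ is the Young diagram whose row lengths are $n$ except that the top $j-i+1$ rows have length $n-(n+1-i)=i-1$ and rows $2,\dots,j-i+1$ lose one more box for the leg — I would just fix the exact shape on a small picture and record that $\lambda$ is one of the $\binom{n+1}{2}$ distinguished coordinates, reading off its pair $(i',j')$ (number of boxes below resp. right of the main diagonal). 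Then I compute $\val_{\text{co-rect}}(p_\lambda)_\mu$ for each row-coordinate $\mu$ (a transpose-symmetrized complement of a rectangle) using the \texttt{maxdiag} formula: $\mathrm{maxdiag}(\mu\setminus\lambda) + \mathrm{maxdiag}(\mu^T\setminus\lambda)$ when $\mu\neq\mu^T$, and $\mathrm{maxdiag}(\mu\setminus\lambda)$ otherwise. On the other hand, $M_n(v_{ij})$ is the $a_{ij}$-column of $M_n$, whose entries are by definition $\val_{\text{co-rect}}(p_{\lambda_{i'j'}})_\mu$ — so this reduces to checking $\lambda=\lambda_{i'j'}$ as Young diagrams, i.e. that the distinguished coordinate with $i'$ boxes below and $j'$ right of the diagonal is exactly the complement of the hook $\nu_{ij}$.

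So the whole lemma collapses to a single combinatorial identity between two descriptions of the same Young diagram, which I would verify by counting boxes relative to the main diagonal: the hook $\nu_{ij}=(n+1-i,1^{j-i})$ has, relative to the main diagonal of the $n\times n$ square, a certain number of boxes strictly right and strictly below; taking the (right-justified bottom-right) complement swaps ``right of diagonal'' with something computable, and I claim it lands at exactly the pair indexing the $a_{ij}$-column. I would set up the correspondence $(i,j)\leftrightarrow(i',j')$ from the two orderings in Section~4 (lexicographic on $(A_{ij})$ for the antichains, and the ``increasing-below-then-decreasing-right'' order for the columns), observe both are total orders on the same index set $\{1\le i\le j\le n\}$ of size $\binom{n+1}{2}$, and check the order-preserving bijection between them matches the geometric bijection ``hook $\mapsto$ its complement.''

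\textbf{Main obstacle.} The one genuinely fiddly point is pinning down the right-justified complement convention precisely enough that the arm/leg counts of $\nu_{ij}$ translate correctly into ``boxes right of'' vs.\ ``boxes below'' the main diagonal for $\lambda$ — an off-by-one here (e.g.\ whether the corner box of the hook counts toward the arm or the leg, or whether ``right-justified in the bottom-right corner'' shifts the diagonal) would break the matching. I would resolve this by working out $n=2$ and $n=3$ against the explicit matrices $M_2, M_3$ already displayed in the paper (where $v_{ij}$'s image should be readable off as a column), confirming the convention, and then state the general shape of $\lambda(i,j)$ once and for all; after that the \texttt{maxdiag} computations are routine and identical on both sides.
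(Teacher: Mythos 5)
Your proposal is correct and follows essentially the same route as the paper: recognize that $M_n(v_{ij})$ is the column of $M_n$ at the position of $a_{ij}$, reduce the lemma to the unique order-preserving bijection between the lexicographically ordered antichain labels and the ordered column labels, and check that this bijection matches the geometric map sending $a_{ij}$ to the (right-justified) complement of the hook $\nu_{ij}$. The paper simply makes your "index bookkeeping" explicit via the formula $a_{ij}\mapsto(i-1,\,n-1-(j-i))$ and a short order-preservation check, which is exactly the remaining detail your plan defers to small-case verification.
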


\begin{proof}
  Since the vertices corresponding to the singleton antichains are unit vectors in $\mR^{\binom{n+1}{2}}$, their images under $M_n$ are the corresponding columns of $M_n$. In particular, since the columns of $M_n$ are totally ordered, $M_n$ provides an order-preserving bijection between the singleton antichains (lexicographic ordering) and the column labels. We will consider the singleton antichains as labelled by $a_{ij}$, and the columns as labelled by pairs $i,j$ with $0\le i,j\le n-1$. 

  Since there is a unique, order-preserving bijection between two totally ordered sets of the same finite cardinality, the action of $M_n$ can be reduced to finding such a bijection. The map $a_{ij}\mapsto (i-1,n-1-(j-i))$ is a bijection from the singleton antichains to the pairs labelling the columns of $M_n$. It remains to check that this is order preserving. Suppose $a_{ij}\le a_{i'j'}$, so that $i< i'$ or $i=i'$ and $j\le j'$. In the first case $i-1< i-1'$. In the case of equality, $i-1=i-1'$, and $n-1-(j-i)\ge n-1-(j'-i')$, so the map is order preserving.

  Hence $M_n$ must be the map sending $v_{ij}$ to the valuation of the Pl\"ucker coordinate indexed by the pair $(i-1, n-1-(j-i))$. Then this diagram is the complement of the hook $(n-(i-1),n-1-(n-1-(j-i)))=(n-i+1,1^{j-i})=\nu_{ij}$ (right justified, in the bottom right corner) as described. 
\end{proof}

Any partition $\lambda\subset n\times n$ has a right-justified complement partition $\lambda^c\subset n\times n$. We decompose $\lambda^c$ into a union of $k$ nonempty (right-justified) hooks $\lambda^c = \nu_1+\cdots  + \nu_k$, where the decomposition comes from taking the hooks from the boxes of $\lambda^c$ along the main diagonal.

\begin{ex}
  For ${\tiny\yng(1)}\subset {\tiny\yng(3,3,3)}$, the complement is ${\tiny\young(:\,\,,\,\,\,,\,\,\,)}$, which decomposes into the hooks ${\tiny\young(::\,,::\,,\,\,\,)}+{\tiny\young(:\,,\,\,)}$. For an asymmetric example, take ${\tiny\yng(2)}\subset {\tiny\yng(3,3,3)}$. The complement is ${\tiny\young(::\,,\,\,\,,\,\,\,)}$, and decomposes into the hooks ${\tiny\yng(2)}+{\tiny\young(::\,,::\,,\,\,\,)}$. 
\end{ex}

\begin{lem}\label{hook-antichain}
  Let $\lambda\subset n\times n$ be a partition with at least as many boxes above the main diagonal as below. Then the hook decomposition of the transpose of the complement corresponds to an antichain of $\cP_n$ under the bijection \ref{hook-poset-bij}. 
\end{lem}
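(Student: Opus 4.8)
The statement claims that for a partition $\lambda\subset n\times n$ with at least as many boxes above the main diagonal as below, the hook decomposition of the transpose of the right-justified complement $(\lambda^c)^T = \nu_1 + \cdots + \nu_k$ gives a set of hooks which, under the bijection $\nu_{ij}\leftrightarrow \{a_{ij}\}$ of Lemma \ref{hook-poset-bij}, forms an \emph{antichain} in $\cP_n$. So the plan is: (1) translate the combinatorics of ``hooks stacked along the main diagonal of a partition contained in $n\times n$'' into statements about the index pairs $(i,j)$; (2) translate the covering relations of $\cP_n$ (namely $b_{ij} > b_{i+1,j+1}$ and $b_{ij} > b_{i,j+1}$) into the same index language; and (3) check that two hooks arising from the diagonal decomposition of a single partition can never be comparable in $\cP_n$.

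\textbf{Key steps.} First I would set up coordinates: the diagonal boxes of $(\lambda^c)^T$ that survive, say in positions along the $(-1)$-diagonal, each spawn a hook $\nu_m = (a_m, 1^{b_m})$ with a corresponding pair $(i_m,j_m)$ via $i_m = n+1-a_m$, $j_m = n+1-a_m+b_m$ (the inverse of the bijection in \ref{hook-poset-bij}). The crucial geometric fact is that when you peel hooks off the main diagonal of a Young diagram, successive hooks are \emph{strictly nested} in a specific way: the arm of a later (inner) hook is strictly shorter than the arm of the earlier one, and likewise for the leg — more precisely, if $\nu_m$ comes from an earlier diagonal box than $\nu_{m'}$ (closer to the top-left), then $a_m > a_{m'}$ \emph{and} $b_m > b_{m'}$, because each hook is contained in the ``staircase'' complement of the previous one inside the remaining rectangle. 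I would prove this nesting directly from the hook-peeling definition, being careful about the transpose and about the right-justification convention. Then, translating back: $a_m > a_{m'}$ means $i_m < i_{m'}$, and $b_m > b_{m'}$ combined with this means $j_m = i_m + (a_m - i_m) \cdots$ — I need to check that $j_m$ versus $j_{m'}$ is \emph{not} forced one way or the other, i.e. the pairs $(i_m, j_m)$ are genuinely incomparable in $\cP_n$. Recall $b_{ij} \ge b_{i'j'}$ in $\cP_n$ iff $i \le i'$, $j \le j'$ \emph{and} $i' - i \le j' - j$ (the cover relations are ``go down-right'' and ``go right,'' so the order ideal below $b_{ij}$ is the set of $b_{i'j'}$ with $i \le i'$, $j - i \le j' - i'$... actually with $i \le i'$ and $i' - i \le j' - j$ — I would pin this down from the Hasse diagram). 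Given $i_m < i_{m'}$, to be comparable we'd need $j_m \le j_{m'}$ and $i_{m'} - i_m \le j_{m'} - j_m$, i.e. $a_m - a_{m'} \le b_m - b_{m'}$... wait, the sign: $i_{m'} - i_m = a_m - a_{m'} > 0$ and $j_{m'} - j_m = (a_m - a_{m'}) + (b_{m'} - b_m)$. Since $b_m > b_{m'}$, we get $j_{m'} - j_m < i_{m'} - i_m$, which \emph{violates} the comparability condition. So no two hooks are comparable — that is exactly the antichain property.

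\textbf{Main obstacle.} The genuine work is Step 2: establishing the strict double-nesting $a_m > a_{m'}$ and $b_m > b_{m'}$ for hooks peeled from consecutive (and hence all) diagonal boxes, and doing so with the transpose and right-justification bookkeeping straight. Once that nesting is in hand, the antichain verification is a short inequality chase as sketched above. I should also check the boundary hypothesis ``at least as many boxes above the diagonal as below'' is what guarantees the hypotheses $a > b$ of \ref{hook-poset-bij} hold for \emph{every} hook in the decomposition (so that each $\nu_m$ legitimately corresponds to some $\{a_{i_m j_m}\}$), and that all pairs land in the valid range $1 \le i \le j \le n$. A clean way to organize the whole argument is to prove that the hook-peeling map is compatible with the Dyck-path/order-ideal picture already set up in the excerpt: the diagonal boxes of the complement correspond exactly to the ``peaks'' of the associated lattice path, and peaks of a lattice path are always an antichain in $\cP_n$ — but I would likely still need the explicit nesting computation to make that rigorous, so I expect that to be where most of the proof's length goes.
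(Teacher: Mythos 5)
Your proposal is correct and follows essentially the same route as the paper: index each hook $\nu_m=(a_m,1^{b_m})$ by the pair $(i_m,j_m)=(n+1-a_m,\,n+1-a_m+b_m)$, note that the hypothesis on boxes above the diagonal gives $a_m>b_m$ so each hook lands in $\cP_n$, and then play the strict nesting $a_m>a_{m'}$, $b_m>b_{m'}$ of diagonal hooks against the comparability condition in $\cP_n$ to get a contradiction. The only cosmetic difference is the direction of the inequality chase (you show comparability fails given nesting, the paper assumes comparability and contradicts nesting), and the paper likewise treats the strict nesting as an immediate property of the hook decomposition rather than proving it at length.
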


\begin{proof}
  Let $\lambda^c=\nu_1+\cdots+\nu_k$ be the hook decomposition, and set the notation $\nu_i = (a_i,1^{b_i})$ for the hooks. By our assumption that $\lambda$ has at least as many boxes above the diagonal as below, we have that $a_i>b_i$. Then $\nu_i$ corresponds to the poset element $x_i=a_{n+1-a_i,b_i+n+1-a_i}$ (because $b_i+n+1-a_i=n+1-(a_i-b_i)\le n$, this is actually an element of $\cP_n$). Now suppose that there are $i,i'$ such that the $x_i\le x_{i'}$. By definition of $\cP_n$, this means:
  \[b_i+n+1-a_i \ge b_{i'}+n+1-a_{i'} \iff b_i-a_i\ge b_{i'}-a_{i'}\]
  \[0\le n+1-a_i-(n+1-a_{i'})\le b_i+n+1-a_i-(b_{i'}+n+1-a_{i'})\]
  \[\iff 0\le a_{i'}-a_i\le b_i-a_i-b_{i'}+a_{i'}\iff 0\le b_i-b_{i'}\text{ and } 0\le a_{i'}-a_i\] 

In order to come from a partition, the hooks $\nu_i$ must satisfy a nesting condition: for any $i$, we must have $a_{i+1}<a_i$ and $b_{i+1}<b_i$. Hence, for any $i<j$, we must have $a_j<a_i$ and $b_j<b_i$. However, these conditions contradict the ones above, so it could not have been that $x_i\le x_{i'}$. Hence for all $i,i'$, the pair $x_i,x_{i'}$ must be incomparable, so the set $\{x_i\}$ is an antichain of $\cP_n$. 
\end{proof}

\begin{ex}
   For ${\tiny\yng(1)}\subset {\tiny\yng(3,3,3)}$, the complement is ${\tiny\young(:\,\,,\,\,\,,\,\,\,)}$, which decomposes into the hooks ${\tiny\young(::\,,::\,,\,\,\,)}+{\tiny\young(:\,,\,\,)}$, corresponding to the elements $a_{13}$ and $a_{23}$, respectively. For ${\tiny\yng(2)}\subset {\tiny\yng(3,3,3)}$, the complement is ${\tiny\young(::\,,\,\,\,,\,\,\,)}$, and decomposes into the hooks ${\tiny\yng(2)}+{\tiny\young(::\,,::\,,\,\,\,)}$, corresponding to the elements $a_{13},a_{22}$. 
\end{ex}

\begin{lem}\label{maxdiag-decomp}
  Let $\lambda\subset n\times n$ be any partition with at least as many boxes above the main diagonal as below. Let $\lambda^c=\nu_1+\cdots+\nu_k$ be the hook decomposition of the complement. Then $\mathrm{maxdiag}(\mu\backslash\lambda)=\sum_i \mathrm{maxdiag}(\mu\backslash\lambda_i)$, where $\lambda_i$ is the complement of $\nu_i$ (right-justified in the bottom right corner of $n \times n$), for any $\mu$ labelling a face of $G_{\text{co-rect}}$. 
\end{lem}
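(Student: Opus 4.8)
The plan is to argue entirely combinatorially, one diagonal at a time. For a finite set $S$ of cells and $d\in\mZ$ let $N_d(S)$ be the number of $(r,c)\in S$ with $r-c=d$, so that $\mathrm{maxdiag}(S)=\max_d N_d(S)$. The first step I would record is the elementary fact that, for any two Young diagrams $\sigma,\tau\subseteq n\times n$, the cells of $\sigma$ and of $\tau$ on a fixed diagonal $d$ are contiguous segments sharing the same top cell $(\max(1,d+1),\max(1,1-d))$, so one segment is an initial part of the other; hence $N_d(\tau\setminus\sigma)=\max(0,N_d(\tau)-N_d(\sigma))$, and therefore
\[
\mathrm{maxdiag}(\mu\setminus\lambda)=\max\Big(0,\ \max_d\big(N_d(\mu)-N_d(\lambda)\big)\Big),
\]
and likewise with $\lambda_i$ in place of $\lambda$. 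This turns the lemma into an identity among explicitly describable integer sequences indexed by $d$.

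Next I would compute those sequences. Put $t_d:=\max(0,n-|d|)=N_d(n\times n)$. Realizing $\mu$ as the complement of the bottom-right $a\times b$ rectangle $R$, one has $N_d(\mu)=t_d-\rho_d$ with $\rho_d:=N_d(R)$; concretely $\rho_d=\min(a,b-d)$ for $d\ge0$ and $\rho_d=\min(a+d,b)$ for $d<0$, a nonnegative unimodal sequence supported exactly on $[1-a,b-1]$. Writing $\nu_i=(a_i,1^{b_i})$ and $W_i:=[-b_i,a_i-1]$: the region $(n\times n)\setminus\lambda_i$ is the hook $\nu_i$ placed in the bottom-right corner, an ``L'' meeting each diagonal in $W_i$ exactly once and meeting no other diagonal, so $N_d(\lambda_i)=t_d-\mathbf 1[d\in W_i]$. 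Finally, the partition $\lambda^c$ is the $180^{\circ}$ rotation (which carries diagonal $d$ to diagonal $-d$) of the in-place complement $(n\times n)\setminus\lambda$, and $\lambda^c$ is the disjoint union of its principal hooks --- which are precisely the $\nu_i$, each meeting the diagonals in an interval that depends only on its arm- and leg-lengths, not its position --- so $N_d((n\times n)\setminus\lambda)=\sum_i\mathbf 1[d\in W_i]=:\ell(d)$ and $N_d(\lambda)=t_d-\ell(d)$. Since the principal hooks of any partition have $a_1>a_2>\cdots$ and $b_1>b_2>\cdots$, the windows are strictly nested, $W_1\supsetneq\cdots\supsetneq W_k$. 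Substituting, the claim becomes
\[
\max\Big(0,\ \max_d\big(\ell(d)-\rho_d\big)\Big)=\sum_{i=1}^{k}\max\Big(0,\ \max_d\big(\mathbf 1[d\in W_i]-\rho_d\big)\Big),\qquad \ell(d)=\sum_i\mathbf 1[d\in W_i].
\]

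The last step is to evaluate the two sides. Since $\mathbf 1[d\in W_i]-\rho_d\le1$, with value $\le0$ off $W_i$, the $i$-th term on the right equals $1$ exactly when $\rho$ vanishes somewhere on $W_i$, i.e. --- as $\mathrm{supp}\,\rho=[1-a,b-1]$ --- exactly when $b_i\ge a$ or $a_i\ge b+1$. Here is the one genuinely useful structural remark: by monotonicity of $(a_i)$ and of $(b_i)$ the sets $\{i:b_i\ge a\}$ and $\{i:a_i\ge b+1\}$ are initial segments of $\{1,\dots,k\}$, hence nested, so the right-hand side equals $s:=\max\big(|\{i:b_i\ge a\}|,\,|\{i:a_i\ge b+1\}|\big)$, and $W_i\not\subseteq\mathrm{supp}\,\rho$ exactly for $i\le s$. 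For the left-hand side: it is $\ge0$ trivially; if $s\ge1$, any $d^{*}\in W_s\setminus\mathrm{supp}\,\rho$ has $\rho_{d^{*}}=0$ and (by nesting) $d^{*}\in W_1,\dots,W_s$, so $\ell(d^{*})-\rho_{d^{*}}\ge s$; conversely, for arbitrary $d$ put $m:=\ell(d)$, so $d\in W_1,\dots,W_m$ and $d\notin W_{m+1},\dots,W_k$. If $m\le s$ then $\ell(d)-\rho_d\le m\le s$; if $m>s$ then $d\in W_{s+1},\dots,W_m\subseteq[1-a,b-1]$, which forces $b_i\le a-1$ and $a_i\le b$ for $s<i\le m$, and combining these with $d\in W_m$ and the strict monotonicity of $(a_i),(b_i)$, a short case split on the sign of $d$ gives $\rho_d\ge m-s$, whence $\ell(d)-\rho_d\le s$. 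So $\max_d(\ell(d)-\rho_d)=s$ and, as $s\ge0$, the two sides agree. The faces $\mu=\varnothing$ and $\mu=n\times n$ need no separate treatment: they give $a=b=n$ (so $s=0$), respectively $\rho\equiv0$ (so $s=k$), and the same computation returns $0$, resp. $k$, on each side.

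The step I expect to be the main obstacle is the case $m>s$ above: one has to see that when $d$ lies in many of the strictly nested windows $W_{s+1}\supsetneq\cdots\supsetneq W_m$ --- all of which have been squeezed inside the support $[1-a,b-1]$ of the rectangle sequence $\rho$ --- the value $\rho_d$ is forced to be at least $m-s$. Everything else is bookkeeping, with the ``two nested initial segments'' observation doing the real work of matching the right-hand side (a sum of indicators) with the left-hand side (a single maximum).
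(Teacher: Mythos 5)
Your proposal is correct, but the mechanism is genuinely different from the paper's, so a comparison is worthwhile. Both arguments share the first half: each $\mathrm{maxdiag}(\mu\setminus\lambda_i)$ is $0$ or $1$, equal to $1$ exactly when the hook $\nu_i$ does not fit inside the rectangle whose complement is $\mu$, and by the strict nesting of the principal hooks the right-hand side equals the number $s$ of hooks sticking out of that rectangle (your $s=\max(\#\{i: b_i\ge a\},\ \#\{i: a_i\ge b+1\})$ is an explicit form of the paper's count). The difference is in proving that the left-hand side also equals $s$. The paper works directly with boxes: it exhibits a diagonal of length $s$ in $\mu\setminus\lambda$ ending at a box of the $s$-th hook outside the rectangle, and for the upper bound it slides a maximal diagonal of $\mu\setminus\lambda$ to a corner of $\mu$ and matches its boxes with distinct hooks not contained in the rectangle. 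You instead arithmetize: the prefix property of Young diagrams on each diagonal reduces everything to the profiles $N_d$, the windows $W_i=[-b_i,a_i-1]$ and the rectangle profile $\rho_d$, and the key upper bound becomes the estimate $\rho_d\ge \ell(d)-s$, which your flagged case split does deliver (for $d\ge 0$: $b-d\ge m-s$ from $b\ge a_{s+1}\ge a_m+(m-s-1)\ge d+(m-s)$, and $a\ge m-s$ from $a-1\ge b_{s+1}\ge b_m+(m-s-1)$; the case $d<0$ is symmetric), so the sketch closes. What your route buys: it avoids the paper's somewhat delicate geometric claim that a maximal diagonal can be slid to a corner of $\mu$, gives the common value $s$ in closed form, and in fact never uses the hypothesis that $\lambda$ has at least as many boxes above the main diagonal as below, so it proves the identity for arbitrary $\lambda\subset n\times n$; the paper's argument is shorter and more visual. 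One wording caveat: a hook meets the diagonals in an interval independent of its position only because each principal hook has its corner on the main diagonal (translation along that diagonal preserves the window $W_i$); you should state that explicitly rather than "not its position."
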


\begin{proof}
  Since $\mu$ is a face label of $G_{\text{co-rect}}$, it is the complement of a rectangle $\rho\subset n\times n$ in the bottom right corner. Then
  \[\mathrm{maxdiag}(\mu\backslash\lambda_i) =
    \begin{cases}
      1 & \nu_i\not\subset \rho\\
      0 & \nu_i\subset \rho
    \end{cases}
  \]

  Hence, the sum $S=\sum_i \mathrm{maxdiag}(\mu\backslash\lambda_i)$ is the number of $i$ such that $\nu_i\not\subset\rho$. Because of the nesting condition of the hooks $\{\nu_1,\dots, \nu_k\}$, there is some $0\le i\le k$ such that $\nu_j\subset\rho$ for $j>i$ and $\nu_j\not\subset\rho$ for $j\le i$. Hence $S=i$. Furthermore, for any box $b$ of $\nu_i$ not contained in $\rho$, then this guarantees the existence of a diagonal of $\mu\backslash \lambda$ of length $i$ ending at $b$. (Since $b\in \nu_i$ and $b\notin \rho$, then $b\in \mu$. Since $\nu_i$ is the $i^{th}$ hook of the complement of $\lambda$, then the $i$ boxes diagonally above and to the left of $b$ are also not in $\lambda$ while being in $\mu$.) Hence $\mathrm{maxdiag}(\mu\backslash \lambda)\ge \sum_i \mathrm{maxdiag}(\mu\backslash \lambda_i)$.

  Conversely, start with a maximal diagonal of $\mu\backslash \lambda$. Since $\mu$ is the complement of a rectangle, we can assume that the diagonal has its corner at one of the (at most) two corners of $\mu$. (Since $\lambda$ is a partition, if $b\in (\mu\backslash\lambda)$, then $b'\in(\mu\backslash\lambda)$ for any $b'$ below or to the right of $b$ in $\mu$. In particular, since $\mu$ is the complement of a rectangle, this means that we can always either move the diagonal down or to the right unless its corner aligns with a corner of $\mu$.) Since $\nu_j$ meets the $j^{th}$ box of the diagonal (counting from the bottom-most box), then the translate of $\nu_j$ to the bottom right meets the $1^{st}$ box of the diagonal, hence $\nu_j$ is not contained in $\rho$, so contributes $1$ to $S$. Hence $\mathrm{maxdiag}(\mu\backslash\lambda) \le \sum_i \mathrm{maxdiag}(\mu\backslash \lambda_i)$. 
\end{proof}

\begin{cor}\label{main-thm}
  Let $\lambda\subset n\times n$ be any partition with at least as many boxes above the main diagonal as below. Let $\lambda^c=\nu_1+\cdots+\nu_k$ be the hook decomposition with corresponding antichain $\{x_i\}$, using the bijection from \ref{hook-antichain}. Then $M_n$ sends the vertex of $\Gamma$ corresponding to the antichain $\{x_i\}_{i=1}^k$ to $\val_{\text{co-rect}}(p_\lambda)$. Hence $\Delta_{\text{co-rect}}\cong\Gamma$.
\end{cor}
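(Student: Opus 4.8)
The plan is to prove the vertex-mapping claim one antichain at a time, and then deduce $\Delta_{\text{co-rect}}\cong\Gamma$ by a volume count. Since $\Gamma$ is the chain polytope of $\cP_n$, Stanley's theorem identifies its vertices with the indicator vectors of antichains, and the indicator vector $v_A$ of an antichain $A=\{x_1,\dots,x_k\}$ is the sum $\sum_i v_{\{x_i\}}$ of the standard basis vectors attached to its singleton sub-antichains. As $M_n$ is linear, $M_n(v_A)=\sum_i M_n(v_{\{x_i\}})$, and by \ref{singleton-antichain} each term equals $\val_{\text{co-rect}}(p_{\lambda_i})$, where $\lambda_i$ is the complement (right-justified in the bottom right of $n\times n$) of the hook $\nu_i$ attached to $x_i$ via \ref{hook-poset-bij}. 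So the whole statement reduces to the additivity identity $\sum_i \val_{\text{co-rect}}(p_{\lambda_i}) = \val_{\text{co-rect}}(p_\lambda)$, where $\lambda$ is the partition with $\lambda^c=\nu_1+\cdots+\nu_k$.

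I would check this one coordinate at a time, i.e. for each face label $\mu$ of $G^{\text{co-rect}}_n$. By the maxdiag formula for valuations of Pl\"ucker coordinates, the $\mu$-coordinate of $\val_{\text{co-rect}}(p_\nu)$ is $\mathrm{maxdiag}(\mu\backslash\nu)$ when $\mu=\mu^T$ and $\mathrm{maxdiag}(\mu\backslash\nu)+\mathrm{maxdiag}(\mu^T\backslash\nu)$ otherwise. Because $\mu$ is the complement of a rectangle, so is $\mu^T$ (complementation commutes with transpose, and the transpose of a rectangle is a rectangle); hence $\mu^T$ is again a face label of $G^{\text{co-rect}}_n$, and \ref{maxdiag-decomp} applies verbatim to $\mu$ and, separately, to $\mu^T$: $\mathrm{maxdiag}(\mu\backslash\lambda)=\sum_i\mathrm{maxdiag}(\mu\backslash\lambda_i)$ and $\mathrm{maxdiag}(\mu^T\backslash\lambda)=\sum_i\mathrm{maxdiag}(\mu^T\backslash\lambda_i)$. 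Substituting into the two cases of the maxdiag formula yields $\val_{\text{co-rect}}(p_\lambda)_\mu=\sum_i\val_{\text{co-rect}}(p_{\lambda_i})_\mu$, which proves the additivity identity, and hence that $M_n$ sends $v_A$ to $\val_{\text{co-rect}}(p_\lambda)$. Since $M_n$ is invertible by \ref{unim}, it maps the $C_{n+1}$ vertices of $\Gamma$ injectively into the set $\{\val_{\text{co-rect}}(p_\lambda):\lambda\subset n\times n\text{ with at least as many boxes above the main diagonal as below}\}$; this set also has $C_{n+1}$ elements (one per transpose-class of Pl\"ucker coordinates), so the map on vertices is a bijection onto it, and therefore $M_n(\Gamma)=\conv\{\val_{\text{co-rect}}(p_\lambda)\}=\delta_G$.

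It remains to see $\delta_G=\Delta_{\text{co-rect}}$. On one hand $\delta_G\subseteq\Delta_{\text{co-rect}}$, since each $p_\lambda/p_{n\times n}$ is a section of $\cO(D)$ and $p_{n\times n}$ restricts to $1$ on $\mathbb{T}_{\text{co-rect}}$, so $\val_{\text{co-rect}}(p_\lambda)$ lies in the $r=1$ part of the Newton--Okounkov body. On the other hand, $M_n$ is unimodular by \ref{unim}, so it preserves volume and $\operatorname{vol}(\delta_G)=\operatorname{vol}(\Gamma)=\deg(\mX)$ by \ref{super-volume} and the remarks following it; and since the columns of $M_n$ are among the $\val_{\text{co-rect}}(p_\lambda)$, the valuation $\val_{\text{co-rect}}$ has full rank, so the standard Newton--Okounkov volume formula (e.g.\ \cite[Cor. 3.2]{kk}) gives $\operatorname{vol}(\Delta_{\text{co-rect}})=\deg(\mX)$ for $D=D_n$. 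A full-dimensional bounded convex body contained in another one of the same finite volume must coincide with it, so $\delta_G=\Delta_{\text{co-rect}}$, and hence $\Delta_{\text{co-rect}}=M_n(\Gamma)\cong\Gamma$. The step that genuinely needs care is the coordinatewise additivity of valuations: one must be certain that \ref{maxdiag-decomp} may be applied to $\mu^T$ (not only $\mu$) and that the two cases of the maxdiag formula recombine exactly, and --- for the conclusion $M_n(\Gamma)=\delta_G$ rather than a mere inclusion --- that the correspondence between antichains of $\cP_n$ and partitions with the stated diagonal imbalance is genuinely a bijection, which is why I would keep the bookkeeping of \ref{hook-poset-bij}, \ref{hook-antichain} and the antichain count explicit.
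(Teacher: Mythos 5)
Your proposal is correct and follows essentially the same route as the paper: reduce to the coordinatewise additivity $\val_{\text{co-rect}}(p_\lambda)=\sum_i\val_{\text{co-rect}}(p_{\lambda_i})$ via \ref{maxdiag-decomp} together with \ref{singleton-antichain} and \ref{hook-antichain}, conclude $M_n(\Gamma)=\delta_G$, and then use unimodularity of $M_n$, $\vol(\Gamma)=\deg(\mX)$ from \ref{super-volume}, and the Newton--Okounkov volume identity to force $\delta_G=\Delta_{\text{co-rect}}$. Your explicit handling of the case $\mu\neq\mu^T$ (applying \ref{maxdiag-decomp} to $\mu^T$, which is again the complement of a rectangle) and the antichain/transpose-class counting are exactly the details the paper leaves implicit, and they are correct.
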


\begin{proof}
  Applying \ref{maxdiag-decomp} coordinatewise, and keeping the same notation, we get the equation
  \[\val_{\text{co-rect}}(p_\lambda)=\sum_{i=1}^k \val_{\text{co-rect}}(p_{\lambda_i})\]
  By \ref{singleton-antichain} and \ref{hook-antichain}, the right hand side is exactly $M_n$ acting on the antichain $\{x_i\}_{i=1}^k$. Hence every valuation of a Pl\"ucker coordinate is obtained by $M_n$ acting on the vertices of $\Gamma$, and $M_n(\Gamma)=\delta_G$. Since $M_n$ is unimodular, then $\vol(\delta_G)=\vol(\Gamma)=\deg(\mX)$ by \ref{super-volume}. Since $\delta_G\subset \Delta_{\text{co-rect}}$ have the same volume and are both closed and convex, they must be equal. Hence $M_n(\Gamma)=\Delta_{\text{co-rect}}$. 
\end{proof}

\begin{ex} Adding the bottom two rows gives the top row, as desired. 
  \[
    \begin{array}{c|c|c|c|c|c|c}
      \mu & {\tiny\yng(3)} & {\tiny\yng(3,3)} & {\tiny\yng(3,1,1)} & {\tiny\yng(3,3,1)} & {\tiny\yng(3,3,2)} & {\tiny\yng(3,3,3)}\\ \hline
      \val_{\text{co-rect}}(p_{\tiny\yng(2)})_\mu & 2 & 3 & 1 & 3 & 2 & 2\\
      \val_{\text{co-rect}}(p_{\tiny\yng(3,3,1)})_\mu & 0 & 1 & 0 & 1 & 1 & 1\\
      \val_{\text{co-rect}}(p_{\tiny\yng(2,2)})_\mu & 2 & 2 & 1 & 2 & 1 & 1\\
      
    \end{array}
  \]
\end{ex}

\section{Future Work}

The main theorem in \cite{RW} about the two polytopes $\Delta_G$ and $\Gamma_G$ is that they are equal. In order to obtain this theorem in our situation, we need an explicit cluster structure on the type $B$ orthogonal Grassmannian $\mathrm{OG}(n,2n+1)$ to carry out the remainder of the proof strategy. In upcoming work \cite{upcoming}, we will present a cluster structure for the type $B$ orthogonal Grassmannians $\mathrm{OG}(n,2n+1)$. This will then allow us to use the cluster structure along with the result in this article to study cluster duality for $\mX$ and $\mX^\vee$. 

\bibliographystyle{amsalpha}
\bibliography{/home/bib.bib}

\end{document}